\theoremstyle{plain}
\def\adl@drawiv#1#2#3{%
        \hskip.5\tabcolsep
        \xleaders#3{#2.5\@tempdimb #1{1}#2.5\@tempdimb}%
                #2\z@ plus1fil minus1fil\relax
        \hskip.5\tabcolsep}
\newcommand{\cdashlinelr}[1]{%
  \noalign{\vskip\aboverulesep
           \global\let\@dashdrawstore\adl@draw
           \global\let\adl@draw\adl@drawiv}
  \cdashline{#1}
  \noalign{\global\let\adl@draw\@dashdrawstore
           \vskip\belowrulesep}}
\newtheorem{lemma}{Lemma}[section]
\newtheorem{theorem}[lemma]{Theorem}
\newtheorem{cor}[lemma]{Corollary}
\newtheorem{prop}[lemma]{Proposition}
\newtheorem{exam}[lemma]{\normalfont \scshape
 Example}
\newtheorem{rema}[lemma]{\normalfont \scshape Remark}
\newtheorem{defi}[lemma]{Definition}
\newcommand{\R}{\mathbb{R}}
\newcommand{\N}{\mathbb{N}}
\newcommand{\norm}[1]{\left\Vert#1\right\Vert}
\newcommand{\abs}[1]{\left\vert#1\right\vert}
\newcommand{\set}[1]{\left\{#1\right\}}
\newcommand{\bfx}{\bm{x}}
\newcommand{\bfzero}{\bm{0}}
\newcommand{\bfinfty}{\bm{\infty}}
\newcommand{\bfone}{\bm{1}}
\newcommand{\bfa}{\bm{a}}
\newcommand{\bfb}{\bm{b}}
\newcommand{\bfM}{\bm{M}}
\newcommand{\bfU}{\bm{U}}
\newcommand{\bfu}{\bm{u}}
\newcommand{\bfV}{\bm{V}}
\newcommand{\bfX}{\bm{X}}
\newcommand{\bfY}{\bm{Y}}
\newcommand{\bfy}{\bm{y}}
\newcommand{\bfZ}{\bm{Z}}
\newcommand{\bfeta}{\bm{\eta}}
\newcommand{\bfgamma}{\bm{\gamma}}
\newcommand{\calP}{\mathcal P}
\newcommand{\calI}{\mathcal I}
\newcommand{\allpart}{\mathscr P}
\newcommand{\diff}{\mathrm{d}}
\def\indic{\mathds{1}}
\begin{document}

\title[Strong Convergence]{Strong Convergence of Multivariate Maxima}

\author[M. Falk]{Michael Falk$^{(1)}$}
\address[1]{Institute of Mathematics, University of W\"{u}rzburg, W\"{u}rzburg, Germany}
\email{michael.falk@uni-wuerzburg.de}

\author[S.A. Padoan]{Simone A. Padoan$^{(2)}$}
\address[2]{Department of Decision Sciences,
Bocconi University of Milan,  Milano, Italy}
\email{simone.padoan@unibocconi.it}

\author[S. Rizzelli]{Stefano Rizzelli$^{(3)}$}
\address[3]{Institute of Mathematics,
\'Ecole Polytechnique F\'ed\'erale de Lausanne, Lausanne, Switzerland}
\email{stefano.rizzelli@epfl.ch}

\subjclass[2010]{Primary 60G70; secondary 62H10, 60F15}%
\keywords{Maxima; strong convergence; total variation; copula; generalized Pareto copula; $D$-norm; multivariate max-stable distribution; domain of attraction}%

\date{\today}%

\begin{abstract}
It is well known and readily seen that the maximum of $n$ independent and uniformly on $[0,1]$ distributed random variables, suitably standardised, converges in total variation distance, as $n$ increases, to the standard negative exponential distribution. We extend this result to higher dimensions by considering copulas.
We show that the strong convergence result holds for copulas that are in a differential neighbourhood of a multivariate generalized Pareto
copula.
%
%
Sklar's theorem then implies convergence in variational distance of the maximum of $n$ independent and identically distributed random vectors with arbitrary common distribution function and (under conditions on the marginals) of its appropriately normalised version. We illustrate how these convergence results can be exploited to establish the almost-sure consistency of some estimation procedures for max-stable models, using sample maxima.
\end{abstract}

\maketitle


%
\section{Introduction}\label{sec:intro}
%
%

Let $U$ be a random variable (rv), which follows the uniform distribution on $[0,1]$, i.e.,
\begin{equation}\label{eq:uniform}
P(U\le u) = \begin{cases}
0,&u<0\\
u,&u\in[0,1]\\
1,&u>1
\end{cases}\quad =: V(u).
\end{equation}
Let $U^{(1)},U^{(2)},\dots$ be independent and identically distributed (iid) copies of $U$. Then, clearly, we have for $x\le 0$ and large $n\in\N$ (natural set),
\begin{align}\label{eqn:convergence in distribution}
P\left(n\left(\max_{1\le i\le n}U^{(i)}-1\right)\le x \right)&= P\left(U_i\le 1+\frac xn,\;1\le i\le n \right)\nonumber\\
&= V^n\left(1+\frac xn\right)\nonumber\\
&= \left(1+\frac xn\right)^n\nonumber\\
&\to_{n\to\infty}G(x),
\end{align}
where
\begin{equation}\label{eqn:exponential}
G(x)=\begin{cases}
\exp(x),&x\le 0\\
1,&x>0
\end{cases}
\end{equation}
is the distribution function (df) of the standard negative exponential distribution. Thus, we have established convergence in distribution of the suitably normalised sample maixmum, i.e.
\begin{equation}\label{eqn:univariate convergence in distribution}
n\left(M^{(n)}-1\right)\to_D\eta,
\end{equation}
where $M^{(n)}:=\max_{1\le i\le n}U^{(i)}$, $n\in\N$, the arrow ``$\to_D$'' denotes convergence in distribution, and the rv $\eta$ has df $G$ in \eqref{eqn:exponential}.

Note that, with $v(x):= V'(x)=1$, if $x\in[0,1]$, and zero elsewhere, we have
\begin{align*}
v_n(x)&:=\frac{\partial}{\partial x}\left(V^n\left(1+\frac xn\right)\right)= V^{n-1}\left(1+\frac xn \right) v\left(1+\frac xn \right)\\
&\to_{n\to\infty} g(x):= G'(x) = \begin{cases}
\exp(x),&x\le 0\\
0,&x>0
\end{cases},
\end{align*}
i.e., we have pointwise convergence of the sequence of densities of normalised maximum $n\left(M^{(n)}-1\right)$, $n\in\N$, to that of $\eta$. Scheff\'{e}'s lemma, see, e.g. \citet[Lemma 3.3.3]{reiss89} now implies convergence in total variation:
\begin{equation}\label{eqn:strong convergence, the univariate case}
\sup_{A\in\mathbb B}\abs{P\left(n\left(M^{(n)}-1\right)\in A \right)-P(\eta\in A)}\to_{n\to\infty}0,
\end{equation}
where $\mathbb B$ denotes the Borel-$\sigma$-field in $\R$.

Let now $X$ be a rv with \textit{arbitrary} df $F$ and $F^{-1}(q):=\set{t\in\R:\, F(t)\ge q}$ with $q\in (0,1)$ be the usual \textit{quantile function} or \textit{generalized inverse} of $F$.
Then, we can assume the representation
\[
X=F^{-1}(U).
\]
%
Let $X^{(1)},X^{(2)},\dots$ be independent copies of $X$. Again we can consider the representation
\[
X^{(i)}=F^{-1}\left(U^{(i)}\right),\qquad i=1,2,\dots
\]
The fact that each quantile function is
a nondecreasing function yields
\begin{align*}
\max_{1\le i\le n}X^{(i)}&=\max_{1\le i\le n}F^{-1}\left(U^{(i)}\right) = F^{-1}\left(\max_{1\le i\le n}U^{(i)} \right)\\
&= F^{-1}\left(1+\frac 1n\left(n\left(\max_{1\le i\le n}U^{(i)}-1 \right) \right) \right).
\end{align*}
The strong convergence in equation \eqref{eqn:strong convergence, the univariate case} now implies the following convergence in total variation:
\begin{equation}\label{eqn:strong_convergence_the general univariate case}
\sup_{A\in\mathbb B}\abs{P\left(\max_{1\le i\le n}X^{(i)}\in A\right)- P\left(F^{-1}\left(1+\frac 1n \eta\right)\in A\right)}\to_{n\to\infty}0.
\end{equation}

Finally, assume that $F$ is a continuous df with density $f=F'$. We denote the right endpoint of $F$ by $x_0:=\sup\{x \in \R: \, F(x)<1\}$. Assume also that $F\in\mathcal{D}(G_\gamma^{*})$, i.e. $F$ belongs to the domain of attraction of a generalised extreme-value df $G_\gamma^{*}$, e.g. \citet[][p. 21]{falkpadoan2018}.
This means, for $n\in\N$, there are norming constants $a_n>0$ and $b_n\in\R$ such that
\begin{equation}\label{eq:gev_family}
F^n(a_nx+b_n)\to_{n\to\infty} \exp\left(-(1+\gamma x)_+^{-1/\gamma}\right)=:G^*_\gamma(x),
\end{equation}
for all $x\in\R$, where $(x)_+=\max(0,x)$ and $\gamma\in\R$ is the so-called {\it tail index}.
Such a coefficient describes the heaviness of the upper tail of the probability density function corresponding to $G^*_\gamma$, see \citet[][for details]{falkpadoan2018}.
Furthermore, in this general case, we also have the pointwise convergence at the density level, i.e.
\begin{align}\label{eq: conv_dens}
f^{(n)}(x):=\frac{\partial}{\partial x}F^n(a_nx +b_n)\to_{n \to \infty}  \frac{\partial}{\partial x}G^*_\gamma(x) =:g^*_\gamma(x)
\end{align}
for all $x\in\R$, if and only if
\begin{align}
\label{eq: restr_frec}
&\lim_{x \to \infty} \frac{x f(x)}{1-F(x)}=1/\gamma, \quad &\text{if }\gamma>0\\
\label{eq: restr_weib}
&\lim_{x \uparrow x_0}\frac{(x_0-x)f(x)}{1-F(x)}=-1/\gamma, \quad &\text{if }\gamma<0\\
\label{eq: restr_gumb}
&\lim_{x \uparrow x_0}\frac{f(x)}{(1-F(x))^2}\int_0^{x_0}1-F(t)\diff t=1, \quad &\text{if } \gamma=0,
\end{align}
see e.g. Proposition 2.5 in \cite{resn08}.
In particular, if \eqref{eq: conv_dens} holds true, Scheff\'{e}'s lemma entails that
\begin{equation}\label{eqn:strong_convergence_the_gev_case}
\sup_{A\in\mathbb B}\abs{P\left(a_n^{-1}\left(\max_{1\le i\le n}X^{(i)}-b_n\right)\in A\right)- P\left(Y\in A\right)}\to_{n\to\infty}0,
\end{equation}
{where $Y$ is a rv with distribution $G_\gamma^{*}$ and $X^{(i)}$, $i=1,\ldots,n$ are independent copies of a rv $X$ with distribution $F$, with $F\in\mathcal{D}(G_\gamma^{*})$.

In this paper we extend the results in \eqref{eqn:strong convergence, the univariate case}, \eqref{eqn:strong_convergence_the general univariate case} and \eqref{eqn:strong_convergence_the_gev_case} to higher dimensions.
First, in Section \ref{sec:strong results for copulas}, we consider copulas.
In Theorem \ref{theo:extension to copulas in a neighborhood}, we demonstrate that the strong convergence result holds for copulas that are in a differential neighbourhood of a multivariate generalized Pareto copula \citep{falkpadoan2018,falk2019}.
As a result of this, we also establish strong convergence of the copula of the maximum of $n$ iid random vectors with arbitrary common df to the limiting extreme-value copula (Corollary \ref{cor: max_copula}).
%
%
Sklar's theorem is then used in Section \ref{sec:general case}  to derive convergence in variational distance of the maximum of $n$ iid random vectors with arbitrary common df and, under restrictions \eqref{eq: restr_frec}-\eqref{eq: restr_gumb} on the margins, of its normalised versions. These results address some still open problems in the literature on multivariate extremes.

Strong convergence for extremal order statistics of univariate iid rv has been well investigated; see, e.g. Section 5.1 in \citet{reiss89} and the literature cited therein. Strong convergence holds in particular under suitable von Mises type conditions on the underlying df, see \eqref{eq: restr_frec}-\eqref{eqn:strong_convergence_the_gev_case} for the univariate normalised maximum. Much less is known in the multivariate setup. In this case, a possible approach is to investigate a point process of exceedances over high thresholds and establish its convergence to  a Poisson process.
This is done under suitable assumptions on variational convergence for truncated point measures, see e.g. Theorem 7.1.4 in \citet{fahure10}. It is proven in \citet{kaufr93} that strong convergence of such multivariate point processes holds if, and only if, strong convergence of multivariate maxima occurs.
Differently from that, we provide simple conditions (namely \eqref{eqn:crucial expansion of copula} and \eqref{eq:limit_under_der}) under which strong convergence of multivariate maxima and its normalised version actually holds. Furthermore, our strong convergence results for sample  maxima are valid for maxima with arbitrary dimensions, unlike those in \cite{DEHAAN1997195}, which are tailored to the two-dimensional case.
Section \ref{sec:applications} concludes the paper, by illustrating how effective our variational convergence results are for statistical purposes. In particular, when the interest is on inferential procedures for sample maxima whose df is in a neighborhood of some multivariate max-stable model, we show that, e.g., our results can be used to establish almost-sure consistency for the empirical copula estimator of the extreme-value copula. Similar results can also be achieved within the Bayesian inferential approach.

%



%
\section{Strong Results for Copulas}\label{sec:strong results for copulas}
%
%

Suppose that the random vector (rv) $\bfU=(U_1,\dots,U_d)$ follows a \textit{copula}, say $C$, on $\R^d$, i.e., each component $U_j$ has df $V_j$ given in formula \eqref{eq:uniform}. Let $\bfU^{(1)},\bfU^{(2)},\dots$ be independent copies of $\bfU$ and put for $n\in\N$
\begin{equation}\label{eqn:componentwise_max}
\bfM^{(n)}:= \left(M_1^{(n)},\dots,M_d^{(n)}\right):= \left(\max_{1\le i\le n}U_1^{(i)},\dots,\max_{1\le i\le n}U_d^{(i)}\right).
\end{equation}

In the sequel the operations involving vectors are meant componentwise, furthermore,
we set $\bfzero=(0,\ldots,0)$, $\bfone=(1\ldots,1)$ and $\bfinfty=(\infty,\ldots,\infty)$.
Finally, hereafter, we denote the copula of the random vector in \eqref{eqn:componentwise_max} by $C^n(\bfu)$, $\bfu \in [0,1]^d$.

Suppose that a convergence result analogous to \eqref{eqn:convergence in distribution} holds for the random \textit{vector} $\bfM^{(n)}$ of componentwise maxima, i.e., suppose there exists a nondegenerate df $G$ on $\R^d$ such that for $\bfx=(x_1,\dots,x_d)\le\bfzero\in\R^d$
\begin{align}\label{eqn:multivariate convergence in distribution}
P\left(n\left(\bfM^{(n)}-\bfone\right)\le \bfx\right) &= P\left(n\left(M_1^{(n)}-1\right)\le x_1, \dots, n\left(M_d^{(n)}-1\right)\le x_d\right)\nonumber\\
&\to_{n\to\infty} G(\bfx).
\end{align}
Then, $G$ is necessarily a {\it multivariate max-stable} or \textit{multivariate extreme-value df}, with {\it extreme-value copula} $C_G$ and standard negative exponential margins $G_j$, $j=1,\ldots,d$, see \eqref{eqn:exponential}.
In the sequel we refer to the df $G$ in \eqref{eqn:multivariate convergence in distribution} as
{\it standard} multivariate max-stable df.
Precisely, the form of $G$ is
$$
G(\bfx)=C_G(G_1(x_1),\ldots,G_d(x_d)),
$$
where the copula $C_G$ can be expressed in terms of $\norm\cdot_D$, a {\it$D$-norm} on $\R^d$,   via
\begin{equation}\label{eqn:extreme_value_copula}
C_G(\bfu)=\exp\left(-\norm{\log u_1,\ldots,\log u_d}_D\right), \quad \bfu\in[0,1]^d,
\end{equation}
while the margins $G_j$, $j=1,\ldots,d$, are as in \eqref{eqn:exponential}.
Therefore, the distribution in \eqref{eqn:multivariate convergence in distribution} has the representation
\begin{equation}\label{eq: representation}
G(\bfx)=\exp\left(-\norm{\bfx}_D\right),\qquad \bfx\le\bfzero\in\R^d.
\end{equation}

The convergence result in \eqref{eqn:multivariate convergence in distribution} implies that $C^{(n)}(\bfu):=C^n(\bfu^{1/n}) \to_{n\to\infty}C_G(\bfu)$, for all $\bfu\in[0,1]^d$, see e.g. \citet[][Corollary 3.1.12]{falk2019}. For brevity, with a little abuse of notation we also denote this latter fact by $C\in\mathcal{D}(C_G)$.
By Theorem 2.3.3 in \citet{falk2019}, there exists a rv
$\bfZ=(Z_1,\dots,Z_d)$ with $Z_i\ge 0$, $E(Z_i)=1$ , $1\le i\le d$, such that
\[
\norm{\bfx}_D= E\left(\max\left(\abs{x_i}Z_i\right) \right),\qquad \bfx\in\R^d.
\]
Examples of $D$-norms are the sup-norm $\norm{\bfx}_\infty=\max_{1\le i\le d}\abs{x_i}$, or the complete family of logistic norms $\norm{\bfx}_p=\left(\sum_{i=1}^d\abs{x_i}^p\right)^{1/p}$, $p\ge 1$. For a recent account on multivariate extreme-value theory and $D$-norms we refer to \citet{falk2019}. In particular, Proposition 3.1.5 in \citet{falk2019} implies that the convergence result in \eqref{eqn:multivariate convergence in distribution} is also equivalent to the expansion
\begin{equation}\label{eqn:crucial expansion of copula}
C(\bfu) = 1- \norm{\bfone-\bfu}_D + o(\norm{\bfone-\bfu})
\end{equation}
as $\bfu\to\bfone\in\R^d$, uniformly for $\bfu\in[0,1]^d$.

In a first step we drop the term $o(\norm{\bfone-\bfu})$ in expansion \eqref{eqn:crucial expansion of copula} and require that there exists $\bfu_0\in (0,1)^d$, such that
\begin{equation}\label{eqn:definition of generalized Pareto copula}
C(\bfu)=1-\norm{\bfone-\bfu}_D, \qquad \bfu\in[\bfu_0,\bfone]\subset\R^d.
\end{equation}
A copula, which satisfies the above expansion is a {\it generalized Pareto copula} (GPC). The significance of GPCs for multivariate extreme-value theory is explained in \citet{falkpadoan2018} and in \citet[Section 3.1]{falk2019}.

Note that
\[
C(\bfu)= \max\left(0,1-\norm{\bfone-\bfu}_D\right),\qquad \bfu\in[0,1]^d,
\]
defines a multivariate df only in dimension $d = 2$, see, e.g., \citet[Examples 2.1, 2.2]{mcnn09}. But one can find for arbitrary dimension $d \ge 2$ a rv,
whose df satisfies equation \eqref{eqn:definition of generalized Pareto copula}, see e.g. \citet[equation 2.15]{falk2019}. For this reason,
we require the condition in \eqref{eqn:definition of generalized Pareto copula} only on some upper interval $[\bfu_0,\bfone]\subset\R^d$.

The df of $n \left(\bfM^{(n)}-\bfone\right)$ is, for $\bfx<\bfzero\in\R^d$ and $n$ large so that $\bfone + \bfx/n\ge\bfu_0$,
\[
P\left(n\left(\bfM^{(n)}-\bfone\right)\le\bfx\right) = \left(1-\frac 1n\norm{\bfx}_D\right)^n =: F^{(n)}(\bfx).
\]
Suppose that the norm $\norm\cdot_D$ has partial derivatives of order $d$.  Then the df $F^{(n)}(\bfx)$ has for $\bfone +\bfx/n\ge{\bfu_0}$ the density
\begin{equation}\label{eq:density_doa}
f^{(n)}(\bfx):=\frac{\partial^d}{\partial x_1\dots\partial x_d}F^{(n)}(\bfx)= \frac{\partial^d}{\partial x_1\dots\partial x_d} \left(1-\frac 1n\norm{\bfx}_D\right)^n.
\end{equation}
%
As for the standard multivariate max-stable df $G$ in \eqref{eq: representation}, its density exists and is given by
\begin{equation}\label{eq:max_stab_den}
g(\bfx):=\frac{\partial^d}{\partial x_1\dots\partial x_d}G(\bfx)= \frac{\partial^d}{\partial x_1\dots\partial x_d} \exp\left(-\norm{\bfx}_D\right),\qquad \bfx\le\bfzero\in\R^d.
\end{equation}

We are now ready to state our first multivariate extension of the convergence in total variation in equation \eqref{eqn:strong convergence, the univariate case}. For brevity, we occasionally denote with the same letter a Borel measure and its distribution function.

\begin{theorem}\label{theo:main theorem for GPD}
Suppose the rv $\bfU$ follows a GPC $C$ with corresponding $D$-norm $\norm\cdot_D$, which has partial derivatives of order $d\ge 2$. Then
\[
\sup_{A\in\mathbb B^d}\abs{P\left(n\left(\bfM^{(n)}-\bfone\right)\in A\right)-G(A)}\to_{n\to\infty} 0,
\]
where $\mathbb B^d$ denotes the Borel-$\sigma$-field in $\R^d$.
\end{theorem}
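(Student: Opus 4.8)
The plan is to reduce the multivariate statement to Scheff\'e's lemma, exactly as in the univariate case, by showing that the densities $f^{(n)}$ in \eqref{eq:density_doa} converge pointwise on $(-\infty,\bfzero)^d$ to the max-stable density $g$ in \eqref{eq:max_stab_den}. Since $f^{(n)}$ and $g$ are both probability densities on $\R^d$ (the former for $n$ large enough that $\bfone+\bfx/n\ge\bfu_0$ on the relevant set, the latter supported on $(-\infty,\bfzero]^d$), once pointwise a.e.\ convergence is established, Scheff\'e's lemma (e.g.\ \citet[Lemma 3.3.3]{reiss89}) yields convergence in $L^1$, which is precisely convergence in total variation over $\mathbb B^d$.

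So the real content is the computation of $\frac{\partial^d}{\partial x_1\cdots\partial x_d}\bigl(1-\tfrac1n\norm{\bfx}_D\bigr)^n$ and its limit. First I would differentiate: writing $h(\bfx):=\norm{\bfx}_D$ and $\phi_n(t):=(1-t/n)^n$, the $d$-th mixed partial of $\phi_n(h(\bfx))$ expands, via the multivariate Fa\`a di Bruno / chain rule, into a sum over set partitions $\pi$ of $\{1,\dots,d\}$ of terms $\phi_n^{(|\pi|)}(h(\bfx))\prod_{B\in\pi}\partial_B h(\bfx)$, where $\partial_B$ denotes the mixed partial in the variables indexed by $B$. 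Here $\phi_n^{(k)}(t)=\frac{n!}{(n-k)!}\,n^{-k}(1-t/n)^{n-k}$, which converges, for fixed $t>0$ and as $n\to\infty$, to $(-1)^k e^{-t}$; note $n!/((n-k)!n^k)\to 1$. Doing the same chain-rule expansion for $g(\bfx)=\exp(-h(\bfx))$ gives a sum over the same partitions of $(-1)^{|\pi|}e^{-h(\bfx)}\prod_{B\in\pi}\partial_B h(\bfx)$ — because $(e^{-t})^{(k)}=(-1)^k e^{-t}$. Since the partition structure and the factors $\partial_B h(\bfx)$ are identical in the two expansions and only the outer-function derivatives differ, termwise convergence $\phi_n^{(|\pi|)}(h(\bfx))\to(-1)^{|\pi|}e^{-h(\bfx)}$ gives $f^{(n)}(\bfx)\to g(\bfx)$ for every fixed $\bfx<\bfzero$. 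For such $\bfx$ we have $h(\bfx)=\norm{\bfx}_D>0$ and $\bfone+\bfx/n\to\bfone$, so eventually $\bfone+\bfx/n\ge\bfu_0$ and the formula \eqref{eq:density_doa} applies; thus pointwise convergence holds on all of $(-\infty,\bfzero)^d$, which is a full-measure set, and the boundary and exterior contribute nothing.

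The main obstacle is bookkeeping rather than anything conceptual: one must (a) justify that the $d$-fold mixed partial of $\bigl(1-\tfrac1n h(\bfx)\bigr)^n$ is genuinely given by the Fa\`a di Bruno sum, which requires only that $h=\norm\cdot_D$ has mixed partials up to order $d$ (the hypothesis) and that $\bfx$ lies in the region where $1-\tfrac1n h(\bfx)>0$ (so no issue with the power being non-integer-differentiable); (b) confirm the elementary limits $\frac{n!}{(n-k)!\,n^{k}}(1-t/n)^{n-k}\to e^{-t}\cdot(\text{sign})$ uniformly on compact $t$-ranges, which is routine; and (c) observe that the factors $\partial_B h(\bfx)$ appearing are continuous in $\bfx$ on $(-\infty,\bfzero)^d$ and that $h$ is $1$-homogeneous, so no degeneracy arises. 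One should also note that $f^{(n)}$ integrates to $F^{(n)}(\bfzero-)=\bigl(1-\tfrac1n\norm{\bfx}_D\bigr)^n\big|$ evaluated appropriately — more carefully, that $P(n(\bfM^{(n)}-\bfone)\in\R^d)=1$ and the density in \eqref{eq:density_doa} integrates to this total mass over the region where it is valid, the complement having vanishing probability as $n\to\infty$ — so that Scheff\'e's lemma applies with the correct normalisation. Assembling these pieces gives the claimed total-variation convergence.
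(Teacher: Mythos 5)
Your proposal is correct and follows essentially the same route as the paper: pointwise convergence of the densities via a Fa\`a di Bruno expansion (you differentiate $t\mapsto(1-t/n)^n$ directly, while the paper routes through $\exp$ and a logarithmic inner function, but the limits of the outer derivatives agree), followed by Scheff\'e's lemma after truncating away the region where the identity $C(\bfu)=1-\norm{\bfone-\bfu}_D$ does not hold and the law of $n\left(\bfM^{(n)}-\bfone\right)$ need not be absolutely continuous --- a truncation step you correctly flag as necessary, and which the paper carries out with a fixed $\varepsilon$-box. (Minor typo: your displayed formula for $\phi_n^{(k)}$ omits the factor $(-1)^k$, though you use the correctly signed limit.)
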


\begin{rema}{\upshape Note that we can write a GPC
\[
C(\bfu)=1-\norm{\bfone-\bfu}_p= 1- \left(\sum_{i=1}^d(1-u_i)^p\right)^{1/p},\qquad \bfu\in[\bfu_0,\bfone]\subset\R^d,
\]
where the $D$-norm $\norm\cdot_D$ is a logistic norm $\norm\cdot_p$, $p\ge 1$, as an {\it Archimedean} copula
\[
C(\bfu)=\varphi^{-1}\left(\sum_{i=1}^d\varphi(u_i)\right), \qquad \bfu\in[\bfu_0,\bfone]\subset\R^d.
\]
The {\it generator function} $\varphi:(0,1]\to[0,\infty)$ is in general strictly decreasing and convex, with $\varphi(1)=0$ (see, e.g. \citealt{mcnn09}). Just set here $\varphi(u):=(1-u)^p$, $u\in[0,1]$. Note that we require the Archimedean structure of $C$ only in its upper tail; this allows the incorporation of $\varphi(u)=(1-u)^p$ as a generator function in arbitrary dimension $d\ge 2$, not only for $d=2$. The partial differentiability condition on the $D$-norm in Theorem \ref{theo:main theorem for GPD} now reduces to the existence of the derivative of order $d$ of $\varphi(u)$ in a left neighbourhood of $1$.}
\end{rema}

For the proof of Theorem \ref{theo:main theorem for GPD} we establish  the following auxiliary result.
\begin{lemma}\label{lem:point_con_den}
Choose $\varepsilon\in(0,1)$ and $\bfx_\varepsilon<\bfzero\in\R^d$ with $G([\bfx_\varepsilon,\bfzero])\ge 1-\varepsilon$.
Then we have for $\bfx\in[\bfx_\varepsilon,\bfzero]$
\begin{equation}\label{eqn:convergence_densities}
f^{(n)}(\bfx)\to_{n\to\infty} g(\bfx).
\end{equation}
\end{lemma}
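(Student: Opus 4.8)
The idea is to reduce the stated pointwise convergence of the $d$-th order mixed partials to a one-dimensional, essentially elementary, fact by means of the multivariate chain rule (Fa\`a di Bruno's formula), exploiting that the inner function $\norm{\cdot}_D$ is the same in both $F^{(n)}$ and $G$.

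First I would fix $\bfx\in[\bfx_\varepsilon,\bfzero]$ and, without loss of generality, assume $\bfx<\bfzero$, so that $t:=\norm{\bfx}_D\in(0,\infty)$; the remaining points of the box lie in a Lebesgue-null set, which is immaterial for the way Lemma \ref{lem:point_con_den} will be used (an a.e. statement suffices for the subsequent Scheff\'e argument in the proof of Theorem \ref{theo:main theorem for GPD}). For $n$ large enough we have $\bfone+\bfx/n\ge\bfu_0$ and $t/n<1$, so $f^{(n)}(\bfx)$ is given by \eqref{eq:density_doa}. Introduce the scalar functions $\psi_n(s):=(1-s/n)^n$ and $\psi(s):=\exp(-s)$, both $d$ times (indeed infinitely) differentiable on $[0,n)$ resp. $[0,\infty)$, so that $F^{(n)}(\bfx)=\psi_n(\norm{\bfx}_D)$ and $G(\bfx)=\psi(\norm{\bfx}_D)$.

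Next I would apply the general chain rule (Fa\`a di Bruno's formula) to the composition of a scalar function with $\norm{\cdot}_D$, which is legitimate because, by hypothesis, $\norm{\cdot}_D$ has partial derivatives of order $d$ at $\bfx$: for any $\phi$ that is $d$ times differentiable at $t$,
\[
\frac{\partial^d}{\partial x_1\cdots\partial x_d}\,\phi\!\left(\norm{\bfx}_D\right)
=\sum_{\pi}\phi^{(|\pi|)}\!\left(\norm{\bfx}_D\right)\prod_{B\in\pi}\frac{\partial^{|B|}}{\prod_{i\in B}\partial x_i}\norm{\bfx}_D,
\]
the sum ranging over all partitions $\pi$ of $\{1,\dots,d\}$ into nonempty blocks. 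Applying this with $\phi=\psi_n$ and with $\phi=\psi$ yields two finite sums representing $f^{(n)}(\bfx)$ and $g(\bfx)$, respectively; crucially, the products of mixed partials of $\norm{\cdot}_D$ appearing in them are identical and do not depend on $n$. Hence it suffices to show, for each fixed $k\in\{1,\dots,d\}$, that $\psi_n^{(k)}(t)\to_{n\to\infty}\psi^{(k)}(t)$. A direct differentiation gives
\[
\psi_n^{(k)}(s)=(-1)^k\left(\prod_{j=0}^{k-1}\Big(1-\frac{j}{n}\Big)\right)\Big(1-\frac{s}{n}\Big)^{n-k},\qquad 0\le s<n,
\]
and since the finite product tends to $1$ and $(1-s/n)^{n-k}\to\exp(-s)$, we obtain $\psi_n^{(k)}(s)\to(-1)^k\exp(-s)=\psi^{(k)}(s)$ for every fixed $s\ge0$, in particular at $s=t$. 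Substituting back into the finitely many terms of the Fa\`a di Bruno expansion gives $f^{(n)}(\bfx)\to g(\bfx)$.

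The only genuine obstacle is the bookkeeping of the multivariate chain rule: one must check that each factor $\partial^{|B|}\norm{\bfx}_D/\prod_{i\in B}\partial x_i$ is well defined and finite at the point $\bfx$ under consideration --- which is precisely what the standing assumption on $\norm{\cdot}_D$ guarantees --- and that these factors are intrinsic to the inner function, hence common to the expansions of $f^{(n)}$ and of $g$ and independent of $n$. Once this is in place, the rest is the elementary limit $\psi_n^{(k)}\to\psi^{(k)}$ above, together with the harmless reduction to $\bfx<\bfzero$ noted at the outset.
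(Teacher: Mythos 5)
Your proof is correct and follows essentially the same route as the paper's: both apply Fa\`a di Bruno's formula to factor out the $n$-independent products of mixed partials of $\norm{\cdot}_D$ and thereby reduce the claim to the elementary scalar limit $\psi_n^{(k)}(s)\to(-1)^k e^{-s}$ for $\psi_n(s)=(1-s/n)^n$. The only difference is organizational --- you use a single composition $\psi_n\circ\norm{\cdot}_D$, whereas the paper nests the composition as $\exp\circ\,\sigma_n\circ\phi$ with $\phi=-\norm{\cdot}_D$ and applies Fa\`a di Bruno twice --- so your bookkeeping is slightly more direct, but the substance is identical.
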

\begin{proof}
$G(\bfx)$ can be seen as the function composition $(\ell \circ \phi)(\bfx)$, where we set $\ell(y)=\exp(y)$ and $\phi(\bfx)=-\norm{\bfx}_D$. Then, by {\it Fa\'{a} di Bruno's formula}, the density in \eqref{eq:max_stab_den} is equal to
\begin{equation}\label{eq:faadibruno_g}
g(\bfx)=\frac{\partial^d}{\partial x_1\dots\partial x_d}\exp(\phi(\bfx))=
G(\bfx)\sum_{\calP\in\allpart}\prod_{B\in\calP}\frac{\partial^{|B|}\phi(\bfx)}{\partial^{B}\bfx},
\end{equation}
where $\allpart$ is the set of all partitions of $\{1,\ldots,d\}$ and the product is over all blocks
$B$ of a partition $\calP\in\allpart$. In particular, $B=(i_1,\ldots,i_k)$ with each $i_j\in \{1,\ldots,d\}$, and
the cardinality of each block is denoted by $|B|=k$. Finally, for a function $h:\R^d\rightarrow \R$ we
define $\partial^{|B|}h/\partial^{B}\bfx:=\partial^k h / \partial x_{i_1},\ldots, \partial x_{i_k}$.

Similarly, $F^{(n)}(\bfx)$ can be seen as the function composition $(\ell\circ \phi_n)(\bfx)$, where we
set $\phi_n(\bfx):=-n\log(1/(1-n^{-1}\norm{\bfx}_D))$. Then, $F^{(n)}(\bfx)=\exp(\phi_n(\bfx))$
and, once again by the Fa\'{a} di Bruno's formula, the density in \eqref{eq:density_doa} is equal to
\begin{align*}
f^{(n)}(\bfx)=\frac{\partial^d}{\partial x_1\dots\partial x_d}\exp(\phi_n(\bfx))=
F^{(n)}(\bfx)\sum_{\calP\in\allpart}\prod_{B\in\calP}\frac{\partial^{|B|}\phi_n(\bfx)}{\partial^{B}\bfx}.
\end{align*}
Clearly, $F^{(n)}(\bfx)\to_{n\to\infty}G(\bfx)$ for all $\bfx\in[\bfx_\varepsilon,\bfzero]$.
Next, $\phi_n(\bfx)$ can be seen as the function composition $(\sigma_n\circ \phi)(\bfx)$, where
we set $\sigma_n(y)=-n\log(1/(1+n^{-1}y))$.
Thus, again by the Fa\'{a} di Bruno's formula we have that for each block $B$
%
%
\begin{align*}
\frac{\partial^{|B|}\phi_n(\bfx)}{\partial^{B}\bfx}&= \sum_{\calP_B\in\allpart_B}\frac{\partial^{|\calP_B|}\sigma_n(y)}{\partial y^{|\calP_B|}}\bigg|_{y=\phi(\bfx)}\prod_{b\in\calP_B}\frac{\partial^{|b|}\phi(\bfx)}{\partial^{b}\bfx},
\end{align*}
where $\allpart_B$ is the set of all partitions of $B=(i_1,\ldots,i_k)$ and the product is over all blocks
$b$ of partition $\calP_B\in\allpart_B$. It is not difficult to check that
$$
\frac{\partial^{|\calP_B|}\sigma_n(y)}{\partial y^{|\calP_B|}}=(-1)^{1+|\calP_B|}\,(|\calP_B|-1)!\left(1+y/n\right)^{-|\calP_B|}\,n^{-|\calP_B|+1}.
$$
Then,
\begin{equation*}
\frac{\partial^{|\calP_B|}\sigma_n(y)}{\partial y^{|\calP_B|}}\to_{n\to\infty}
\begin{cases}
1, & \text{if } |\calP_B|=1,\\
0, & \text{if } |\calP_B|>1.
\end{cases}
\end{equation*}
Notice that $|\calP_B|=1$ when $\calP_B=B$ and in this case $b=B$.
Consequently, for all $\bfx\in[\bfx_\varepsilon,\bfzero]$, we have
\[
\frac{\partial^{|B|}\phi_n(\bfx)}{\partial^{B}\bfx}\to_{n\to\infty} \frac{\partial^{|B|}\phi(\bfx)}{\partial^{B}\bfx}.
\]
Therefore, the pointwise convergence in \eqref{eqn:convergence_densities} follows.
\end{proof}

\begin{proof}[Proof of  Theorem \ref{theo:main theorem for GPD}]
It is sufficient to consider $A\subset\mathbb B^d\cap(-\infty,0]^d$, where $\mathbb B^d$ denotes the Borel-$\sigma$-field in $\R^d$.
Moreover, choose $\varepsilon>0$ and $\bfx_\varepsilon<\bfzero\in\R^d$ with $G([\bfx_\varepsilon,\bfzero])\ge 1-\varepsilon$.

We already know that
\[
\sup_{\bfx\le\bfzero}\abs{P\left(n\left(\bfM^{(n)}-\bfone\right)\le\bfx\right)-G(\bfx)}\to_{n\to\infty}0,
\]
which implies
\begin{equation}\label{eqn:convergence of maxima}
\abs{P\left(n\left(\bfM^{(n)}-\bfone\right)\in[\bfx_\varepsilon,\bfzero]\right)-G([\bfx_\varepsilon,\bfzero])}\to_{n\to\infty}0
\end{equation}
and, thus,
\[
\limsup_{n\to\infty}P\left(n\left(\bfM^{(n)}-\bfone\right)\in[\bfx_\varepsilon,\bfzero]^\complement\right)\le \varepsilon
\]
or
\begin{align*}
&\limsup_{n\to\infty} \sup_{A\in\mathbb B^d\cap[\bfx_\varepsilon,\bfzero]^\complement}\abs{P\left(n\left(\bfM^{(n)}-\bfone\right)\in A\right)-G(A)}\\
&\le \limsup_{n\to\infty}P\left(n\left(\bfM^{(n)}-\bfone\right)\in [\bfx_\varepsilon,\bfzero]^\complement\right) + G\left([\bfx_\varepsilon,\bfzero]^\complement \right) \le 2\varepsilon.
\end{align*}
As $\varepsilon>0$ was arbitrary, it is therefore sufficient to establish
\[
\sup_{A\in\mathbb B^d\cap[\bfx_\varepsilon,\bfzero]}\abs{P\left(n\left(\bfM^{(n)}-\bfone\right)\in A\right) - G(A)}\to_{n\to\infty} 0.
\]

Now, from equation \eqref{eqn:convergence of maxima} we know that
\[
\int_{[\bfx_\varepsilon,\bfzero]} f^{(n)}(\bfx)\,dx\to_{n\to\infty} \int_{[\bfx_\varepsilon,\bfzero]}g(\bfx)\,d\bfx.
\]
Together with \eqref{eqn:convergence_densities}, we can apply Scheff\'{e}'s lemma and obtain
\[
\int_{[\bfx_\varepsilon,\bfzero]}\abs{f^{(n)}(\bfx)-g(\bfx)}\,d\bfx\to_{n\to\infty} 0.
\]
The bound
\[
\sup_{A\in\mathbb B^d\cap[\bfx_\varepsilon,\bfzero]}\abs{P\left(n\left(\bfM^{(n)}-\bfone\right)\in A\right) - G(A)}\le \int_{[\bfx_\varepsilon,\bfzero]}\abs{f^{(n)}(\bfx)-g(\bfx)}\,d\bfx
\]
now implies the assertion of  Theorem \ref{theo:main theorem for GPD}.
\end{proof}

Next we extend Theorem \ref{theo:main theorem for GPD} to a copula $C$, which is in a {\it differentiable neighborhood} of a GPC, defined next.
Suppose that $C$ satisfies expansion \eqref{eqn:crucial expansion of copula}, where the $D$-norm $\norm\cdot_D$ on $\R^d$ has partial derivatives of order $d$. Assume also that $C$ is such that for each nonempty block of indices $B=(i_1,\dots,i_k)$ of $\set{1,\dots,d}$,
\begin{equation}\label{eq:limit_under_der}
\frac{\partial^k}{\partial x_{i_1},\dots,\partial x_{i_k}}n
%
\left(C\left(\bfone+\frac{\bfx}n\right)-1\right)
\to_{n\to\infty}
\frac{\partial^k}{\partial x_{i_1},\dots,\partial x_{i_k}}\phi(\bfx),
\end{equation}
holds true for all $\bfx<\bfzero\in\R^d$, where $\phi(\bfx)=-\norm{\bfx}_D$.
\begin{theorem}\label{theo:extension to copulas in a neighborhood}
Suppose the copula $C$ satisfies conditions \eqref{eqn:crucial expansion of copula} and \eqref{eq:limit_under_der}. Then we obtain
\[
\sup_{A\in\mathbb B^d}\abs{P\left(n\left(\bfM^{(n)}-\bfone\right)\in A \right)-  G(A)}\to_{n\to\infty}0,
\]
where $G$ is the standard max-stable distribution with corresponding $D$-norm $\norm\cdot_D$, i.e., it has df $G(\bfx)=\exp(-\norm{\bfx}_D)$, $\bfx\le\bfzero\in\R^d$.
\end{theorem}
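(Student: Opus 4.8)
The plan is to mimic the proof of Theorem \ref{theo:main theorem for GPD}, the only difference being that the clean GPC identity $C(\bfu)=1-\norm{\bfone-\bfu}_D$ is replaced by the expansion \eqref{eqn:crucial expansion of copula}, so that the density $f^{(n)}$ is no longer an exact derivative of a power of a $D$-norm but carries an additional error term. Exactly as in the proof of Theorem \ref{theo:main theorem for GPD}, it suffices to work on a compact lower-left box: fix $\varepsilon>0$ and $\bfx_\varepsilon<\bfzero$ with $G([\bfx_\varepsilon,\bfzero])\ge 1-\varepsilon$, note that \eqref{eqn:crucial expansion of copula} already gives $P(n(\bfM^{(n)}-\bfone)\le\bfx)=C^n(\bfone+\bfx/n)\to G(\bfx)=\exp(-\norm{\bfx}_D)$ pointwise (hence uniformly, by monotonicity and continuity of the limit), so the mass outside $[\bfx_\varepsilon,\bfzero]$ is asymptotically at most $2\varepsilon$ and can be discarded. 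It then remains to prove $\int_{[\bfx_\varepsilon,\bfzero]}\abs{f^{(n)}(\bfx)-g(\bfx)}\,d\bfx\to 0$, for which — by Scheff\'e's lemma — it is enough to establish pointwise convergence $f^{(n)}(\bfx)\to g(\bfx)$ on $[\bfx_\varepsilon,\bfzero]$, since the integrals of $f^{(n)}$ over the box already converge to that of $g$ by the distributional convergence just noted.

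The core of the argument is therefore the pointwise convergence of the densities, which is where condition \eqref{eq:limit_under_der} enters. Write $F^{(n)}(\bfx)=C^n(\bfone+\bfx/n)=\exp(\psi_n(\bfx))$ with $\psi_n(\bfx):=n\log C(\bfone+\bfx/n)$. By Fa\`a di Bruno's formula,
\[
f^{(n)}(\bfx)=\frac{\partial^d}{\partial x_1\dots\partial x_d}\exp(\psi_n(\bfx))=F^{(n)}(\bfx)\sum_{\calP\in\allpart}\prod_{B\in\calP}\frac{\partial^{|B|}\psi_n(\bfx)}{\partial^{B}\bfx},
\]
so it suffices to show, for every nonempty block $B=(i_1,\dots,i_k)$, that $\partial^{|B|}\psi_n(\bfx)/\partial^{B}\bfx\to\partial^{|B|}\phi(\bfx)/\partial^{B}\bfx$ pointwise on $\bfx<\bfzero$; combined with $F^{(n)}(\bfx)\to G(\bfx)$ this yields $f^{(n)}(\bfx)\to G(\bfx)\sum_{\calP}\prod_{B}\partial^{|B|}\phi/\partial^{B}\bfx=g(\bfx)$ via \eqref{eq:faadibruno_g}. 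To handle $\psi_n$, write it as the composition $\psi_n=\tau_n\circ\, w_n$ where $w_n(\bfx):=n\bigl(C(\bfone+\bfx/n)-1\bigr)$ and $\tau_n(y):=n\log(1+y/n)$. A further application of Fa\`a di Bruno gives
\[
\frac{\partial^{|B|}\psi_n(\bfx)}{\partial^{B}\bfx}=\sum_{\calP_B\in\allpart_B}\tau_n^{(|\calP_B|)}\!\bigl(w_n(\bfx)\bigr)\prod_{b\in\calP_B}\frac{\partial^{|b|}w_n(\bfx)}{\partial^{b}\bfx},
\]
and one checks as in the proof of Lemma \ref{lem:point_con_den} that $\tau_n^{(m)}(y)=(-1)^{m-1}(m-1)!\,(1+y/n)^{-m}n^{1-m}$, so $\tau_n'(w_n(\bfx))\to 1$ (using $w_n(\bfx)\to\phi(\bfx)$, which is the $k=d$, or indeed any single-index, instance of \eqref{eq:limit_under_der} — more precisely the full expansion \eqref{eqn:crucial expansion of copula} gives $w_n(\bfx)\to-\norm{\bfx}_D$) while $\tau_n^{(m)}(w_n(\bfx))\to 0$ for $m\ge 2$, provided the factors $\partial^{|b|}w_n/\partial^b\bfx$ stay bounded. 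The derivatives $\partial^{|b|}w_n/\partial^b\bfx$ are exactly the quantities appearing on the left of \eqref{eq:limit_under_der}, which converge to $\partial^{|b|}\phi/\partial^b\bfx$ and are therefore bounded for each fixed $\bfx$; hence only the term $\calP_B=\{B\}$ survives in the limit, giving $\partial^{|B|}\psi_n/\partial^{B}\bfx\to\partial^{|B|}\phi/\partial^{B}\bfx$ as required.

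The main obstacle — and the reason condition \eqref{eq:limit_under_der} is imposed as a hypothesis rather than derived — is precisely the interchange of differentiation with the limit in $\bfu\to\bfone$: the expansion \eqref{eqn:crucial expansion of copula} controls $C(\bfu)-1+\norm{\bfone-\bfu}_D$ only at the level of the function itself (a $o(\norm{\bfone-\bfu})$ remainder), and there is no general mechanism forcing the mixed partials of this remainder, rescaled by $n$, to vanish; \eqref{eq:limit_under_der} is the exact assumption needed to make the Fa\`a di Bruno bookkeeping go through. A secondary point to check carefully is that Scheff\'e's lemma applies on the box $[\bfx_\varepsilon,\bfzero]$: here $f^{(n)}\ge0$ as the density of a genuine probability measure (the df $C^n(\bfone+\cdot/n)$ of $n(\bfM^{(n)}-\bfone)$ is nondecreasing and $d$-increasing, so has a nonnegative density wherever it is differentiable), $g\ge0$, both integrate to finite quantities over the box, and $\int_{[\bfx_\varepsilon,\bfzero]}f^{(n)}\to\int_{[\bfx_\varepsilon,\bfzero]}g$ by the uniform convergence of the dfs; this is the hypothesis set of Scheff\'e's lemma in the form it is used in the proof of Theorem \ref{theo:main theorem for GPD}. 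Once $\int_{[\bfx_\varepsilon,\bfzero]}\abs{f^{(n)}-g}\to0$ is in hand, the inequality $\sup_{A\in\mathbb B^d\cap[\bfx_\varepsilon,\bfzero]}\abs{P(n(\bfM^{(n)}-\bfone)\in A)-G(A)}\le\int_{[\bfx_\varepsilon,\bfzero]}\abs{f^{(n)}-g}$ together with the earlier reduction to the box completes the proof.
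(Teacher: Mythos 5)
Your proposal is correct and follows essentially the same route as the paper: the identical composition $\psi_n=\tau_n\circ w_n$ with $w_n(\bfx)=n(C(\bfone+\bfx/n)-1)$, the same Fa\`a di Bruno bookkeeping showing only the trivial partition survives in the limit, and the same reduction to a compact box plus Scheff\'e's lemma inherited from the proof of Theorem \ref{theo:main theorem for GPD}. You simply spell out the steps the paper compresses into ``the proof is similar to that of Theorem \ref{theo:main theorem for GPD}'', including the (correct) observation that boundedness of the factors $\partial^{|b|}w_n/\partial^{b}\bfx$, guaranteed by their convergence under \eqref{eq:limit_under_der}, is what kills the higher-order terms $\tau_n^{(m)}$, $m\ge 2$.
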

\begin{proof}
The proof of Theorem \ref{theo:extension to copulas in a neighborhood} is similar to that
of Theorem \ref{theo:main theorem for GPD}, but this time we resort to a variant of Lemma \ref{lem:point_con_den} as follows.
Note that for $n\in\N$,
\[
P\left(n\left(\bfM^{(n)}-\bfone\right)\le \bfx\right) = C^n\left(\bfone+\frac{\bfx}n\right),\qquad \bfx\le\bfzero\in\R^d.
\]
Moreover, $C^n\left(\bfone+\bfx/n\right)$ is the function composition $(\ell\circ \phi_n)(\bfx)$, where
we now set $\phi_n(\bfx):=n\log\left(C\left(\bfone+\bfx/n\right)\right)$. Furthermore, $\phi_n(\bfx)$
is the composition function $(\sigma_n\circ v_n)(\bfx)$, where
we set $v_n(\bfx):=n (C(\bfone + \bfx/n)-1)$ and $\sigma_n$ is as in the proof of Lemma \ref{lem:point_con_den}.
Then, in the Fa\'{a} di Bruno's formula we have that for each block $B$,
\begin{align*}
\frac{\partial^{|B|}\phi_n(\bfx)}{\partial^{B}\bfx}&= \sum_{\calP_B\in\allpart_B}\frac{\partial^{|\calP_B|}\sigma_n(y)}{\partial y^{|\calP_B|}}\bigg|_{y=v_n(\bfx)}\prod_{b\in\calP_B}\frac{\partial^{|b|}v_n(\bfx)}{\partial^{b}\bfx}.
\end{align*}
%
%
%
By assumptions \eqref{eqn:crucial expansion of copula} and \eqref{eq:limit_under_der} we
obtain that, for each block $b$ of a partition $\calP_B\in\allpart_B$,
$$
\frac{\partial^{|b|}v_n(\bfx)}{\partial^{b}\bfx} \to_{n\to\infty} \frac{\partial^{|b|}\phi(\bfx)}{\partial^{b}\bfx}, \quad \bfx<\bfzero\in\R^d.
$$
Therefore, as in Lemma \ref{lem:point_con_den}, we obtain
\begin{equation}\label{eq:conv_copulas}
\frac{\partial^{|B|}\phi_n(\bfx)}{\partial^{B}\bfx}\to_{n\to\infty} \frac{\partial^{|B|}\phi(\bfx)}{\partial^{B}\bfx}, \quad \bfx<\bfzero\in\R^d.
\end{equation}
and the result follows.
\end{proof}

\begin{exam}\label{exam:Gumbel_copula}
{\upshape
Consider, the {\it Gumbel-Hougaard family} $\set{C_p:\,p\ge 1}$ of Archimedean copulas, with generator function $\varphi_p(u):=(-\log(u))^p$, $p\ge 1$. This is an extreme-value family of copulas. In particular, we have
\begin{align*}
C_p(\bfu)=\exp\left(-\left(\sum_{i=1}^d(-\log(u_i))^p\right)^{1/p}\right)
= 1- \norm{\bfone-\bfu}_p + o(\norm{1-\bfu}),
\end{align*}
as $\bfu\in(0,1]^d$ converges to $\bfone\in\R^d$, i.e., condition \eqref{eqn:crucial expansion of copula} is satisfied, where the $D$-norm is the logistic norm $\norm\cdot_p$ and the limiting distribution is
$G(\bfx)=\exp(-\norm\bfx_p)$.
The copula $C_p$ also satisfies conditions \eqref{eq:limit_under_der}. To prove it, we express $C_p\left(\bfone+\bfx/n\right)$ as the function composition $(\ell\circ \varphi_n)(\bfx)$, with $\ell$ as in the proof of Lemma \ref{lem:point_con_den} and $\varphi_n(\bfx):=\log\left(C_p\left(\bfone+\bfx/n\right)\right)$. Observe that
%
$$
n
\varphi_n(\bfx)=-n\norm{\log\left(1+\frac{\bfx}n\right)}_p =: -nt(s_n(\bfx)),
$$
where $t(\cdot)=\norm\cdot_p$, $s_n(\bfx)=(s_n(x_1),\ldots,s_n(x_d))$, and
$s_n(\cdot)=\log(1+\cdot /n)$.
Hence, applying the Fa\'{a} di Bruno's formula to the partial derivatives of $n (\ell\circ \varphi_n(x)-1)$ and noting that, on one hand, $C_p\left(\bfone+\bfx/n\right)\to_{n\to\infty} 1$, on the other hand,
\begin{align*}
&\frac{\partial^k}{\partial x_{i_1},\dots,\partial x_{i_k}} n\varphi_n(\bfx)
\\
&=
-n\frac{\partial^k}{\partial y_{i_1},\dots,\partial y_{i_k}} t(\bfy)\big|_{\bfy=s_n(\bfx)}
\frac{\partial s_n(x_{i_1})}{\partial x_{i_1}}\cdot \cdots \cdot \frac{\partial s_n(x_{i_k})}{\partial x_{i_k}}\\
&\simeq-n \prod_{j=1}^{k-1}(1-jp)\norm\bfx_p^{1-kp} n^{kp-1} \prod_{j=1}^k \frac{|x_{i_j}|^p}{x_{i_j}} n^{-k(p-1)}
\prod_{j=1}^k\left(1+\frac{x_{i_j}}{n}\right)^{-1}n^{-k}\\
&\to_{n\to\infty} -\frac{\partial^k}{\partial x_{i_1},\dots,\partial x_{i_k}}\norm\bfx_p,
\end{align*}
the desired result obtains. In particular, notice that we pass from the first to second line of the above display by computing partial derivatives, then from the second to the third one by exploiting the asymptotic equivalence $\log(1+y)\simeq y$, for $y \to 0$.
}
\end{exam}
\begin{exam}\label{exam:DoA_copula}
{\upshape
Consider the copula
\begin{equation}\label{eq:exponential_copula}
C(\bfu)=1-d+\sum_{i=1}^d u_i+\sum_{2\leq i\leq d}\left((-1)^i\sum_{\substack{B\subseteq\{1,\ldots,d\}\\ |B|=i}}\left(\sum_{j\in B} \frac{1}{1-u_j}-d+1\right)^{-1}\right).
\end{equation}
This provides the $d$-dimensional version (with $d\geq2$) of the $2$-dimensional copula associated to the df discussed in \cite[][Example 5.14]{resn08}. It can be checked that $C\in\mathcal{D}(C_G)$, where $C_G$
is, for all $\bfu\in[0,1]^d$,
the extreme-value copula
\begin{equation}\label{eq:limit_exp_copula}
C_G(\bfu)=\exp\left(\sum_{i=1}^d \log u_i+\sum_{2\leq i\leq d}\left((-1)^{i+1}\sum_{\substack{B\subseteq\{1,\ldots,d\}\\ |B|=i}}\left(\sum_{j\in B} \frac{1}{\log u_j}-d+1\right)^{-1}\right)\right).
\end{equation}
Then, by \citet[][Proposition 3.1.5 and Corollary 3.1.12]{falk2019} the copula in \eqref{eq:exponential_copula} satisfies condition \eqref{eqn:crucial expansion of copula},
with $D$-norm
$$
\norm{\bfx}_D=\sum_{i=1}^d|x_i|+\sum_{2\leq i\leq d}\left((-1)^{i+1}\sum_{\substack{B\subseteq\{1,\ldots,d\}\\ |B|=i}}\left(\sum_{j\in B} \frac{1}{|x_j|}\right)^{-1}\right).
$$
The copula in \eqref{eq:exponential_copula} also complies conditions in \eqref{eq:limit_under_der}. Indeed, for $2 \leq k \leq d$,
$$
\frac{\partial^k}{\partial x_{i_1},\dots,\partial x_{i_k}}\norm{\bfx}_D=
\sum_{k\leq j\leq d}\left(
(-1)^{j+1}k!\sum_{\substack{\calI \subseteq B \subseteq \{1, \ldots,d\}\\ |B|=j}}\left(\sum_{l\in B}\frac{1}{|x_l|}\right)^{-(k+1)}\prod_{v=1}^k\frac{1}{x_{i_v}^2}\frac{|x_{i_v}|}{x_{i_v}}
\right),
$$
where $\calI=\{i_1,\ldots,i_k\}$. When $k=1$, $(\partial/\partial x_{i_k})\norm{\bfx}_D $ is given by the right-hand side of the above expression plus the term $|x_{i_k}|/x_{i_k}$. Furthermore, for $2 \leq k \leq d$,
\begin{align*}
&\frac{\partial^k}{\partial x_{i_1},\dots,\partial x_{i_k}}C(\bfone+\bfx/n)\\
&=\frac{1}{n}\left(\sum_{k\leq j\leq d}\left(
(-1)^{j+1}k!\sum_{\substack{\calI \subseteq B \subseteq \{1, \ldots,d\}\\ |B|=j}}\left(\sum_{l\in B}\frac{1}{x_l}+\frac{d-1}{n}\right)^{-(k+1)}\prod_{v=1}^k\frac{1}{x_{i_v}^2}
\right)\right).
\end{align*}
When $k=1$, $n(\partial/\partial x_{i_k}) C(\bfone+\bfx/n)$ is given by the right-hand side of the above expression plus $1$. Therefore, for $k=1, \ldots, d$, we have that
$$
n\frac{\partial^k}{\partial x_{i_1},\dots,\partial x_{i_k}}C(\bfone+\bfx/n)\to_{n\to\infty}-\frac{\partial^k}{\partial x_{i_1},\dots,\partial x_{i_k}}\norm{\bfx}_D
$$
and the desired result obtains.
}
\end{exam}
Let $C$ be a copula and $C^n$ be the copula of the corresponding componentiwise maxima, see \eqref{eqn:componentwise_max}.
We recall that $C^{(n)}(\bfu):=C^n(\bfu^{1/n})$, $\bfu \in [0,1]^d$.
Assume that $C\in\mathcal{D}(C_G)$, where $C_G$ is an extreme-value copula.
A readily demonstrable result implied by Theorem \ref{theo:extension to copulas in a neighborhood} is the convergence of $C^{(n)}$ to $C_G$ in variational distance.
\begin{cor}\label{cor: max_copula}
Assume $C$ satisfies conditions \eqref{eqn:crucial expansion of copula} and \eqref{eq:limit_under_der}, with continuous partial derivatives of order up to $d$ on $(0,1)^d$, then
$$
\sup_{A \in \mathbb{B}^d\cap [0,1]^d}|C^{(n)}(A)-C_G(A)|\to_{n\to\infty}0.
$$
\end{cor}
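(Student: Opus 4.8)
The plan is to reduce the statement about the copula $C^{(n)}$ of the componentwise maxima to the statement about the distribution of $n(\bfM^{(n)}-\bfone)$ already established in Theorem \ref{theo:extension to copulas in a neighborhood}, via a change of variables that is a smooth bijection between a neighbourhood of $\bfone$ in $[0,1]^d$ (the relevant part of the support of $C^{(n)}$) and a neighbourhood of $\bfzero$ in $(-\infty,0]^d$ (the support of $G$). Concretely, $C^{(n)}(\bfu)=C^n(\bfu^{1/n})$ is the df of $\bfU^{(n)}:=(\bfM^{(n)})^{n}$ componentwise, and one checks that the map $T_n(\bfx)=(e^{x_1/n}\cdot(\ldots))$ — more precisely the coordinatewise map $x_j\mapsto (1+x_j/n)^{\,n}$, i.e. the push-forward under $\bfx\mapsto (1+\bfx/n)^n$ of the law of $n(\bfM^{(n)}-\bfone)$ — equals the law of $\bfU^{(n)}$ on the upper corner. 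Similarly $C_G$ is the push-forward of $G$ under $\bfx\mapsto e^{\bfx}$ (coordinatewise), since $C_G(\bfu)=\exp(-\norm{\log u_1,\dots,\log u_d}_D)$ and $G(\bfx)=\exp(-\norm{\bfx}_D)$.

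First I would argue, exactly as in the proof of Theorem \ref{theo:main theorem for GPD}, that it suffices to control $A$ lying in a small neighbourhood $[\bfu_\eps,\bfone]$ of $\bfone$: outside such a neighbourhood both $C^{(n)}$ and $C_G$ put mass at most $\eps$ for $n$ large, using the weak convergence $C^{(n)}\to C_G$ (which holds by \eqref{eqn:crucial expansion of copula} and \citet[Corollary 3.1.12]{falk2019}) together with the fact that $C_G$ has no atoms on the boundary. Second, on that neighbourhood I would write the densities: $C^{(n)}$ has density $c^{(n)}(\bfu)=\partial^d C^{(n)}(\bfu)/\partial u_1\cdots\partial u_d$, which by the chain rule is $f^{(n)}(T_n^{-1}(\bfu))$ times the Jacobian of $T_n^{-1}$, where $f^{(n)}$ is the density from Theorem \ref{theo:extension to copulas in a neighborhood}; and $C_G$ has density $c_G(\bfu)=g(\log u_1,\dots,\log u_d)\prod_j u_j^{-1}$. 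Since $T_n^{-1}(\bfu)=(n(u_1^{1/n}-1),\dots,n(u_d^{1/n}-1))\to(\log u_1,\dots,\log u_d)$ and the Jacobian $\prod_j u_j^{1/n-1}\to\prod_j u_j^{-1}$, the pointwise convergence \eqref{eq:conv_copulas} of $f^{(n)}$ to $g$ (which is continuous on $(-\infty,\bfzero)^d$) gives $c^{(n)}(\bfu)\to c_G(\bfu)$ pointwise on $(0,1)^d$.

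Third, I would invoke Scheff\'e's lemma on the box $[\bfu_\eps,\bfone]$: from the already-proven total-variation convergence in Theorem \ref{theo:extension to copulas in a neighborhood} one gets $\int_{[\bfu_\eps,\bfone]}c^{(n)}\to\int_{[\bfu_\eps,\bfone]}c_G$ (the integral of $c^{(n)}$ over the box equals $P(n(\bfM^{(n)}-\bfone)\in T_n^{-1}[\bfu_\eps,\bfone])$, and $T_n^{-1}[\bfu_\eps,\bfone]\to[\log\bfu_\eps,\bfzero]$ with $G$ assigning no mass to the symmetric difference), which together with the pointwise density convergence yields $\int_{[\bfu_\eps,\bfone]}|c^{(n)}-c_G|\to 0$, hence $\sup_{A\subseteq[\bfu_\eps,\bfone]}|C^{(n)}(A)-C_G(A)|\to 0$. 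Combining with the tail bound and letting $\eps\downarrow 0$ gives the claim.

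The main obstacle is the uniformity of the change of variables near the boundary: the Jacobian $\prod_j u_j^{1/n-1}$ and the map $T_n^{-1}$ degenerate as any coordinate $u_j\to 0$, so the pointwise density convergence, while fine on the open cube, must be complemented by the Scheff\'e argument rather than by a direct uniform estimate — and one must be careful that the box $[\bfu_\eps,\bfone]$ is chosen so that its image under $T_n^{-1}$ stays inside the region $\{\bfx<\bfzero\}$ where \eqref{eq:conv_copulas} is known, which it does since $T_n^{-1}$ maps $[\bfu_\eps,\bfone]$ into $[n(\bfu_\eps^{1/n}-\bfone),\bfzero]$, a set shrinking to $[\log\bfu_\eps,\bfzero]\subset(-\infty,\bfzero]^d$. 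A minor additional point is justifying that $G$ (equivalently $C_G$) has no mass on the relevant boundary faces, which follows from $G$ having a density on $(-\infty,\bfzero)^d$ with total mass $1$.
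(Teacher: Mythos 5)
Your overall architecture coincides with the paper's: both proofs reduce the claim to Theorem \ref{theo:extension to copulas in a neighborhood}, discard a region near the boundary of $[0,1]^d$ carrying small $C_G$-mass, and finish with pointwise density convergence plus a Scheff\'{e}-type argument on the remaining compact box. The one genuine structural difference is the intermediate object: you use the exact identity that $C^{(n)}$ is the push-forward of the law of $n(\bfM^{(n)}-\bfone)$ under $\bfx\mapsto(\bfone+\bfx/n)^n$ and compare its density directly with $c_G$, whereas the paper introduces $\widetilde{C}^{(n)}(\bfu)=C^n(\bfone+\log\bfu/n)$, the push-forward under $\bfx\mapsto e^{\bfx}$ --- which inherits total-variation convergence to $C_G$ for free from Theorem \ref{theo:extension to copulas in a neighborhood} --- and then shows that $C^{(n)}$ and $\widetilde{C}^{(n)}$ merge on the interior via a Taylor expansion of $u^{1/n}$ and a density-ratio argument. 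Either route works; they are of comparable length and difficulty.

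There is, however, one step you present as automatic that is not: from ``$f^{(n)}\to g$ pointwise and $g$ is continuous'' you conclude $f^{(n)}(T_n^{-1}(\bfu))\to g(\log\bfu)$. Since $T_n^{-1}(\bfu)=n(\bfu^{1/n}-\bfone)$ is a \emph{moving} argument, pointwise convergence to a continuous limit does not by itself yield convergence along such sequences (one can superpose vanishing bumps that track $T_n^{-1}(\bfu)$). Unwinding the Fa\'{a} di Bruno expansion, $c^{(n)}(\bfu)$ involves partial derivatives of $C$ evaluated at $\bfu^{1/n}=\bfone+\log\bfu/n+o(1/n)$, not exactly at $\bfone+\bfx/n$ for fixed $\bfx$, so condition \eqref{eq:limit_under_der} as stated does not apply directly. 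This is precisely why the corollary carries the additional hypothesis that $C$ has continuous partial derivatives of order up to $d$ on $(0,1)^d$, which the paper invokes at the corresponding point of its proof; your argument should appeal to that hypothesis (or to a locally uniform strengthening of \eqref{eq:limit_under_der}) rather than to continuity of $g$. With that repair, the remainder of your proof --- the localization to $[\bfu_\eps,\bfone]$, the Jacobian computation, and the Scheff\'{e} step fed by the mass convergence from Theorem \ref{theo:extension to copulas in a neighborhood} --- is sound.
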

\begin{proof}
For any $\bfu \in [0,1]^d$, define
$$
\widetilde{C}^{(n)}(\bfu):=P\left(n\left(\bfM^{(n)}-\bfone\right)\leq \log \bfu \right)=C^n(1+\log \bfu /n).
$$
By Theorem \ref{theo:extension to copulas in a neighborhood}, $\widetilde{C}^{(n)}$ converges to $C_G$ in variational distance.
Now, for some $\varepsilon\in(0,1)$, set
$$
\mathcal{U}_\varepsilon:=\cup_{j=1}^d\{\bfu \in [0,1]: u_j <\varepsilon \text{ or } u_j>1-\varepsilon\}.
$$
In particular, fix $\varepsilon>0$ such that $C_G(\mathcal{U}_\varepsilon^\complement)>1-\varepsilon_0$, for some arbitrarily small $\varepsilon_0\in(0,1)$.
Then, using the Taylor expansion $u^{1/n}=1+n^{-1}\log u+o(1/n)$, with uniform reminder over
$\mathcal{U}_\varepsilon^\complement$, together with the Lipschitz continuity of $C$,
we obtain
$$
\sup_{\bfu \in \mathcal{U}_\varepsilon^\complement}\left|C^{(n)}(\bfu)-\widetilde{C}^{(n)}(\bfu)\right|\to_{n \to \infty}0,
$$
and therefore
$
\lim\sup_{n \to \infty} C^{(n)}(\mathcal{U}_\varepsilon)<\varepsilon_0.
$
This implies that, as $n \to \infty$, we have
\begin{equation}\label{eqn:copula_inequalities}
\sup_{A \in \mathbb{B}^d\cap [0,1]^d}\left|C^{(n)}(A)-C_G(A) \right| \leq
\sup_{A \in \mathbb{B}^d\cap \,\mathcal{U}_\varepsilon^\complement}\left|C_\varepsilon^{(n)}(A)-\widetilde{C}_\varepsilon^{(n)}(A)\right|+O(\varepsilon_0),
\end{equation}
where $C_\varepsilon^{(n)}$ and $\widetilde{C}^{(n)}_\varepsilon$ are the normalised versions $C_\varepsilon^{(n)}=C^{(n)}/C^{(n)}(\mathcal{U}_\varepsilon^\complement)$ and
$\widetilde{C}^{(n)}_\varepsilon=\widetilde{C}^{(n)}/\widetilde{C}^{(n)}(\mathcal{U}_\varepsilon^\complement)$.
Finally, denote their densities by $c_\epsilon^{(n)}$ and $\widetilde{c}_\varepsilon^{(n)}$, respectively. Then, the supremum on the right hand side in \eqref{eqn:copula_inequalities} is attained at the set
$$
\widetilde{\mathcal{U}}_\varepsilon^{(n)}:=\left\{\bfu \in \mathcal{U}_\varepsilon^\complement: c_\varepsilon^{(n)}(\bfu)>\widetilde{c}_\varepsilon^{(n)}(\bfu)\right\}.
$$
Notice that $c_\varepsilon^{(n)}$ and $\widetilde{c}_\varepsilon^{(n)}$ are both positive on $\mathcal{U}_\varepsilon^\complement$, for $n$ sufficiently large.
Following steps similar to those in the proof of Theorem \ref{theo:extension to copulas in a neighborhood} and exploiting the continuity of the partial derivatives of $C$,  we obtain
$$
c_\varepsilon^{(n)}(\bfu)/ \widetilde{c}_\varepsilon^{(n)}(\bfu)\to_{n \to \infty}1,
$$
for all $\bfu \in \mathcal{U}_\varepsilon^\complement$.
Therefore, $\widetilde{\mathcal{U}}_\varepsilon^{(n)} \downarrow\emptyset$ as $n\to\infty$ and the result follows.
\end{proof}
%

%
%
\section{The General Case}\label{sec:general case}
%
%

Let $\bfX=(X_1,\dots,X_d)$ be a rv with arbitrary df $F$. By Sklar's theorem (\citealt{sklar59, sklar96}) we can assume the representation
\[
\bfX=(X_1,\dots,X_d)=\left(F_1^{-1}(U_1),\dots,F_d^{-1}(U_d)\right),
\]
where $F_i$ is the df of $X_i$, $i=1,\dots,d$, and $\bfU=(U_1,\dots,U_d)$ follows a copula, say $C$, corresponding to $F$.

Let $\bfX^{(1)},\bfX^{(2)},\dots$ be independent copies of $\bfX$ and let $\bfU^{(1)},\bfU^{(2)},\dots$ be independent copies of $\bfU$. Again we can assume the representation
\[
\bfX^{(i)}=\left(X_1^{(i)},\dots,X_d^{(i)}\right)=\left(F_1^{-1}\left(U_1^{(i)}\right),\dots,F_d^{-1}\left(U_d^{(i)}\right)\right),\qquad i=1,2,\dots
\]
From the fact that each quantile function $F_i^{-1}$ is monotone increasing, we obtain
\begin{align*}
&\bfM^{(n)}\\
&:= \left(\max_{1\le i\le n}X_1^{(i)},\dots,\max_{1\le i\le n}X_d^{(i)} \right)\\
&=\left(\max_{1\le i\le n}F_1^{-1}\left(U_1^{(i)}\right),\dots,\max_{1\le i\le n}F_d^{-1}\left(U_d^{(i)} \right) \right)\\
&= \left(F_1^{-1}\left(\max_{1\le i\le n}U_1^{(i)} \right),\dots, F_d^{-1}\left(\max_{1\le i\le n}U_d^{(i)} \right) \right)\\
&= \left(F_1^{-1}\left(1+\frac 1n \left(n \left(\max_{1\le i\le n}U_1^{(i)}-1\right) \right) \right),\dots, F_d^{-1}\left(1+\frac 1n \left(n \left(\max_{1\le i\le n}U_d^{(i)}-1\right) \right) \right)  \right).
\end{align*}

Theorem \ref{theo:main theorem for GPD} now implies the following result.

\begin{prop}\label{prop:multi}
Let $\bfeta=(\eta_1,\dots\eta_d)$ be a rv with standard multivariate max-stable df $G(\bfx)=\exp(-\norm{\bfx}_D)$, $\bfx\le\bfzero\in\R^d$. Let $\bfX$ be a rv with some distribution $F$ and a copula $C$.
Suppose that either $C$ is a GPC with corresponding $D$-norm $\norm\cdot_D$, which has partial derivatives of order $d\ge 2$, or $C$ satisfies conditions \eqref{eqn:crucial expansion of copula} and \eqref{eq:limit_under_der}. Then,
\begin{align*}
&\sup_{A\in\mathbb B^d}\abs{P\left(\bfM^{(n)}\in A\right)- P\left(\left(F_1^{-1}\left(1+\frac 1n\eta_1 \right),\dots, F_d^{-1}\left(1+\frac 1n \eta_d\right)  \right)\in A\right)}\\
&\to_{n\to\infty}0.
\end{align*}
\end{prop}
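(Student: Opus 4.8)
The plan is to derive Proposition~\ref{prop:multi} as a direct consequence of the already-established total-variation convergence of $n(\bfM^{(n)}-\bfone)$ on the copula scale (Theorem~\ref{theo:main theorem for GPD} under the GPC hypothesis, or Theorem~\ref{theo:extension to copulas in a neighborhood} under \eqref{eqn:crucial expansion of copula} and \eqref{eq:limit_under_der}), combined with the fact that applying a fixed measurable map does not increase the total-variation distance between two probability measures. First I would recall the representation derived just above the statement, namely
\[
\bfM^{(n)} = \left(F_1^{-1}\!\left(1+\tfrac1n N_1^{(n)}\right),\dots,F_d^{-1}\!\left(1+\tfrac1n N_d^{(n)}\right)\right),
\qquad N_j^{(n)} := n\!\left(\max_{1\le i\le n}U_j^{(i)}-1\right),
\]
so that, writing $\bfN^{(n)}=(N_1^{(n)},\dots,N_d^{(n)})$, we have $\bfM^{(n)} = T(\bfN^{(n)})$ where $T\colon(-\infty,0]^d\to\R^d$ is the fixed (componentwise, nondecreasing, hence Borel measurable) map $T(\bfx)=\bigl(F_1^{-1}(1+x_1/n),\dots\bigr)$ — wait, this depends on $n$, so more care is needed; I will instead define $T_n(\bfx):=\bigl(F_1^{-1}(1+x_1/n),\dots,F_d^{-1}(1+x_d/n)\bigr)$, note that $\bfM^{(n)}=T_n(\bfN^{(n)})$ and that the target variable in the statement is exactly $T_n(\bfeta)$.

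The key step is then the push-forward inequality: for any two probability measures $\mu,\nu$ on $\R^d$ and any Borel map $S$, $\sup_{A\in\mathbb B^d}\abs{\mu(S^{-1}A)-\nu(S^{-1}A)}\le\sup_{B\in\mathbb B^d}\abs{\mu(B)-\nu(B)}$, since $\{S^{-1}A : A\in\mathbb B^d\}\subseteq\mathbb B^d$. Applying this with $\mu=\mathcal L(\bfN^{(n)})$, $\nu=\mathcal L(\bfeta)$ and $S=T_n$ gives
\[
\sup_{A\in\mathbb B^d}\abs{P(\bfM^{(n)}\in A)-P(T_n(\bfeta)\in A)}
\le \sup_{B\in\mathbb B^d}\abs{P(\bfN^{(n)}\in B)-P(\bfeta\in B)},
\]
and the right-hand side tends to $0$ by Theorem~\ref{theo:main theorem for GPD} or Theorem~\ref{theo:extension to copulas in a neighborhood}, according to which hypothesis on $C$ is assumed. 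Since $T_n(\bfeta)=\bigl(F_1^{-1}(1+\eta_1/n),\dots,F_d^{-1}(1+\eta_d/n)\bigr)$ is precisely the comparison random vector appearing in the statement, this is the claimed conclusion. I would also note that the map $T_n$ need not be injective and its explicit form is irrelevant — only measurability is used — which is why no regularity of the margins $F_j$ is required here (in contrast to the normalised version treated later in the section).

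The only mild subtlety, and the one I would be most careful about, is the measurability of $T_n$ and the fact that the bound is genuinely uniform over \emph{all} Borel sets $A$ simultaneously (not just over a convergence-determining class); both are handled by the elementary observation that $T_n^{-1}$ maps $\mathbb B^d$ into $\mathbb B^d$, each generalized inverse $F_j^{-1}$ being nondecreasing and therefore Borel measurable. One should also make sure the representation $\bfM^{(n)}=T_n(\bfN^{(n)})$ is justified on the full probability space and not merely in distribution, but this is exactly the almost-sure identity established in the display preceding the proposition via the monotonicity of the quantile functions; since total-variation distance depends only on the laws, using it in distribution would suffice in any case. No further estimates are needed, so I expect no real obstacle — the content is entirely carried by the previously proved theorems, and this proposition is a packaging statement that transports their conclusion through Sklar's representation.
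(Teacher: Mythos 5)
Your proof is correct and coincides with the paper's intended argument: the paper gives no explicit proof, merely asserting that Theorem \ref{theo:main theorem for GPD} (or Theorem \ref{theo:extension to copulas in a neighborhood}) ``implies'' the proposition after displaying the quantile-transform representation of $\bfM^{(n)}$, and the justification you supply --- that total variation distance cannot increase under the fixed Borel map $T_n(\bfx)=\bigl(F_1^{-1}(1+x_1/n),\dots,F_d^{-1}(1+x_d/n)\bigr)$ because $T_n^{-1}$ maps $\mathbb B^d$ into $\mathbb B^d$ --- is exactly the missing step. Your attention to the $n$-dependence of $T_n$ (harmless, since the push-forward inequality holds for each $n$ with a bound not depending on the map) and to measurability of the generalized inverses is appropriate and adds nothing beyond what the paper implicitly assumes.
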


Finally, we generalise the result in Proposition \ref{prop:multi} to the case where the rv of componentwise maxima is suitably normalised.
Precisely, we now consider the case that $F \in \mathcal{D}(G_{\bfgamma}^*)$,
i.e. $F$ belongs to the domain of attraction of a {\it generalised} multivariate max-stable df $G_{\bfgamma}^*$, with tail index $\bfgamma=(\gamma_1,\ldots,\gamma_d)$, e.g. \citet[][Ch. 4]{falkpadoan2018}.
This means that there exist sequences of norming vectors $\bfa_n=(a_{n}^{(1)},\ldots,a_{n}^{(d)})>\bfzero$ and $\bfb_n=(b_n^{(1)},\ldots,b_n^{(d)})\in\R^d$, for $n\in\N$, such that $(\bfM^{(n)}-\bfb_n)/\bfa_n \to_{D} \bfY$ as $n \to \infty$, where $\bfY$ is a rv with distribution $G_{\bfgamma}^*$. The copula of
$G_{\bfgamma}^*$ is the extreme-value copula in
\eqref{eqn:extreme_value_copula} and its margins $G_{\gamma_j}^*$, $j=1,\ldots,d$, are members of the generalised extreme-value family of dfs in \eqref{eq:gev_family}.

To attain the convergence in variational distance, we combine Proposition \ref{prop:multi}, obtained under conditions involving only dependence structures, with univariate von Mises conditions on the margins $F_1,\ldots, F_d$, see \eqref{eq: conv_dens}-\eqref{eq: restr_gumb}. We denote by $\bfx_0:=(x_{0}^{(1)}, \ldots, x_{0}^{(d)})$, where $x_{0}^{(j)}:=\sup\{x \in \R: \, F_j(x)<1\}$,  $j=1,\ldots,d$, the vector of endpoints.
\begin{cor}\label{cor: conv_rescaled}
Let $\bfY$ and $\bfX$ be rvs with a generalised multivariate max-stable df $G_{\bfgamma}^*$ and a continuous df $F$, respectively. Assume that $F \in \mathcal{D}(G_{\bfgamma}^*)$
and that its copula $C$ satisfies the assumptions of Proposition \eqref{prop:multi}. Assume further
that, for $1\leq j\leq d$, the density of the $j$-th margin $F_{j}$ of $F$ satisfies one of the conditions \eqref{eq: restr_frec}-\eqref{eq: restr_gumb} with $f'$, $\gamma$ and $x_0$ replaced by $f_j'$, $\gamma_j$ and $x_{0}^{(j)}$. Then,
\begin{align*}
&\sup_{A\in\mathbb B^d}\abs{P\left(\frac{\bfM^{(n)}-\bfb_n}{\bfa_n}\in A\right)- P\left(\bfY\in A\right)}\to_{n\to\infty}0.
\end{align*}
\end{cor}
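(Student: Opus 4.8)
The plan is to reduce the $d$-dimensional normalised-maxima statement to a combination of the copula-level result of Proposition \ref{prop:multi} and the univariate von Mises density conditions, by factoring the map $\bfx\mapsto\bfx$ through the marginal quantile transforms. Write $\bfM^{(n)}=(F_1^{-1}(M_1^{(n)}),\dots,F_d^{-1}(M_d^{(n)}))$ where $M_j^{(n)}=\max_{i\le n}U_j^{(i)}$ and $\bfU$ has copula $C$. Because $F$ is continuous, $(\bfM^{(n)}-\bfb_n)/\bfa_n$ is the pushforward of the law of $n(\bfM_U^{(n)}-\bfone)$, where $\bfM_U^{(n)}$ denotes the vector of uniform maxima, under the componentwise map $T_n(\bfx)=\big((F_1^{-1}(1+x_1/n)-b_n^{(1)})/a_n^{(1)},\dots\big)$. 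The key observation is that the total-variation distance is invariant under measurable bijections and, more usefully, does not increase under a common pushforward; so it suffices to control the total-variation distance between the law of $n(\bfM_U^{(n)}-\bfone)$ pushed forward by $T_n$ and the law of $\bfY=(G_{\gamma_1}^{*-1}(\dots),\dots)$ realised as a pushforward of a standard max-stable $\bfeta$.

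Concretely, the first step is to recall from Proposition \ref{prop:multi} that
\[
\sup_{A\in\mathbb B^d}\Big|P\big(\bfM^{(n)}\in A\big)-P\big(\big(F_1^{-1}(1+\eta_1/n),\dots,F_d^{-1}(1+\eta_d/n)\big)\in A\big)\Big|\to 0,
\]
so it remains only to show that the law of $\big((F_j^{-1}(1+\eta_j/n)-b_n^{(j)})/a_n^{(j)}\big)_{j}$ converges in total variation to the law of $\bfY$. Since this random vector has copula equal to the extreme-value copula $C_G$ (the copula of $\bfeta$) for every $n$ — the marginal transforms being monotone — and since $\bfY$ has the same copula $C_G$, the problem decouples: by the standard fact that convergence in total variation of a vector with fixed copula follows from convergence in total variation of each margin (argue via densities and Scheff\'e, writing the joint density as the copula density times the product of marginal densities, and using that $c_G$ is fixed), it suffices to prove, for each $j$,
\[
\sup_{B\in\mathbb B}\Big|P\big((F_j^{-1}(1+\eta_j/n)-b_n^{(j)})/a_n^{(j)}\in B\big)-P\big(Y_j\in B\big)\Big|\to 0,
\]
where $Y_j$ has df $G_{\gamma_j}^*$. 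But $\eta_j$ has the standard negative exponential df, so $1+\eta_j/n$ has the same law as the uniform maximum $M_j^{(n)}$, and hence $(F_j^{-1}(1+\eta_j/n)-b_n^{(j)})/a_n^{(j)}$ has exactly the law of the normalised univariate maximum $a_n^{(j)-1}(\max_{i\le n}X_j^{(i)}-b_n^{(j)})$. The stated von Mises condition on $f_j$ is precisely \eqref{eq: restr_frec}-\eqref{eq: restr_gumb}, which by \eqref{eq: conv_dens} gives pointwise density convergence, and then \eqref{eqn:strong_convergence_the_gev_case} (Scheff\'e) yields the univariate total-variation convergence.

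The main obstacle is the decoupling step: justifying that total-variation convergence of a sequence of $d$-variate laws sharing a common copula $C_G$ reduces to total-variation convergence of the margins. The clean route is again Scheff\'e: the density of $\big((F_j^{-1}(1+\eta_j/n)-b_n^{(j)})/a_n^{(j)}\big)_j$ equals $c_G\big(G_{\gamma_1}^*(\cdot),\dots\big)\prod_j g_{j,n}(\cdot)$ where $g_{j,n}$ is the $j$-th normalised-maximum density, while the density of $\bfY$ is $c_G\big(G_{\gamma_1}^*(\cdot),\dots\big)\prod_j g_{\gamma_j}^*(\cdot)$; pointwise convergence $g_{j,n}\to g_{\gamma_j}^*$ (from \eqref{eq: conv_dens}) together with continuity of $c_G$ gives pointwise convergence of the joint densities on the set where $c_G$ is finite, and integrability of the limit plus Scheff\'e's lemma closes the argument, exactly as in the proof of Theorem \ref{theo:main theorem for GPD}. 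One must be slightly careful that $c_G$ may be unbounded near the boundary of the support; this is handled as in that proof by first restricting to a compact set $[\bfx_\varepsilon,\bfzero]$ (in the transformed coordinates, a set carrying $G_{\bfgamma}^*$-mass at least $1-\varepsilon$) on which everything is bounded, then letting $\varepsilon\downarrow 0$. Finally, one reassembles: the triangle inequality combining the Proposition \ref{prop:multi} bound with the just-established convergence of the law of $\big((F_j^{-1}(1+\eta_j/n)-b_n^{(j)})/a_n^{(j)}\big)_j$ to that of $\bfY$ gives the claim.
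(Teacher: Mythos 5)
Your overall route is the same as the paper's: split off the Proposition \ref{prop:multi} term by the triangle inequality (using that total variation is preserved under the bijective affine normalisation), and then prove total-variation convergence of the surrogate vector $\bfY_n=\bigl((F_j^{-1}(1+\eta_j/n)-b_n^{(j)})/a_n^{(j)}\bigr)_j$ to $\bfY$ by pointwise convergence of joint densities plus Scheff\'{e}, with the von Mises conditions \eqref{eq: restr_frec}--\eqref{eq: restr_gumb} supplying the marginal density limits. That is exactly the paper's proof in outline.

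There is, however, one step that is wrong as written. You claim that since $\eta_j$ is standard negative exponential, $1+\eta_j/n$ ``has the same law as the uniform maximum $M_j^{(n)}$,'' and hence that the $j$-th margin of $\bfY_n$ is \emph{exactly} the normalised univariate maximum, so that \eqref{eqn:strong_convergence_the_gev_case} applies directly. This is false: $P(1+\eta_j/n\le u)=e^{-n(1-u)}$ for $u\le 1$, whereas $P(M_j^{(n)}\le u)=u^n$; these distributions are only asymptotically close (that closeness is precisely \eqref{eqn:strong convergence, the univariate case}), not equal. Consequently the margin of $\bfY_n$ has df $\exp\bigl(-n(1-F_j(a_n^{(j)}x+b_n^{(j)}))\bigr)$ and density $na_n^{(j)}f_j(a_n^{(j)}x+b_n^{(j)})\exp\bigl(-n(1-F_j(\cdot))\bigr)$, not the density $\frac{\partial}{\partial x}F_j^n(a_n^{(j)}x+b_n^{(j)})$ of \eqref{eq: conv_dens}. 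The gap is repairable in two ways: either compute this density directly and check it still converges pointwise to $g^*_{\gamma_j}$ under the von Mises conditions (this is what the paper does, writing the factor as $na_n^{(j)}F_j^{n-1}f_j/F_j^{n-1}$), or insert one further total-variation step transporting \eqref{eqn:strong convergence, the univariate case} through the monotone map $x\mapsto (F_j^{-1}(1+x/n)-b_n^{(j)})/a_n^{(j)}$. A related, more minor imprecision: the joint density of $\bfY_n$ is $c_G$ evaluated at the \emph{finite-$n$} marginal dfs $H_{j,n}(x_j)$, not at $G^*_{\gamma_j}(x_j)$; your appeal to continuity of $c_G$ (equivalently, of $g$, as in the paper) is what makes the pointwise limit come out right, so state the density correctly before passing to the limit. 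Your proposed ``decoupling lemma'' (fixed copula plus marginal TV convergence implies joint TV convergence) is not needed and is not obviously true in the generality stated; the direct joint-density Scheff\'{e} argument you give afterwards is the one to keep.
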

\begin{proof}
Let $\bfeta=(\eta_1, \ldots, \eta_d)$ be rv with standard multivariate max-stable distribution
$G(\bfx)=\exp(-\norm{\bfx}_D)$.
Define,
$$
\bfY_n:= \left(\frac{1}{a_n^{(1)}}\left(F_1^{-1}\left(1+\frac 1n \eta_1\right)-b_n^{(1)}\right), \ldots, \frac{1}{a_n^{(d)}}\left(F_d^{-1}\left(1+\frac 1n \eta_d\right)-b_n^{(d)}\right)\right).
$$
Observe that
\begin{align}
\nonumber
\sup_{A\in\mathbb B^d}\abs{P\left(\frac{\bfM^{(n)}-\bfb_n}{\bfa_n}\in A\right)- P\left(\bfY\in A\right)} \leq T_{1,n}+T_{2,n},
\end{align}
where
$$
T_{1,n}:=\sup_{A\in\mathbb B^d}\abs{P\left(\bfM^{(n)}\in A\right)- P\left(\left(F_1^{-1}\left(1+\frac 1n\eta_1 \right),\dots, F_d^{-1}\left(1+\frac 1n \eta_d\right)  \right)\in A\right)}
$$
and
$$
T_{2,n}:=\sup_{A\in\mathbb B^d}\abs{P\left(\bfY_{n}\in A\right)- P\left( \bfY \in A\right)}.
$$
By Proposition \ref{prop:multi}, $T_{1,n} \to_{n \to \infty}0$. To show that $T_{2,n} \to_{n \to \infty}0$, it is sufficient to show  pointwise convergence of the probability density function of $\bfY_n$ to that of $\bfY$ and then to appeal to the Scheff\'{e}'s lemma.
First, notice that $G_{\bfgamma}^*$ and $G$ have the same extreme-value copula. Thus, from \eqref{eqn:extreme_value_copula} it follows that, for $\bfx \in \R^d$,
$
G_{\bfgamma}^*(\bfx)=G(\bfu(\bfx)),
$
where $\bfu(\bfx)=\left(u^{(1)}(x_1), \ldots, u^{(d)}(x_d)\right)$ with $u^{(j)}(x_j)=\log G^*_{\gamma_j}(x_j)$ for $j=1,\ldots,d$.
Now, define
$
Q^{(n)}(\bfx):=P(\bfY_n\leq \bfx)=G(\bfu_n(\bfx)),
$
for $\bfx \in \mathbb{R}^d$, where $\bfu_{n}(\bfx)=\left(u_n^{(1)}(x_1), \ldots,u_n^{(d)}(x_d) \right)$ with
$$
u_n^{(j)}(x_j):=-n\left(1-F_j\left(a_n^{(j)}x_j+b_n^{(j)}\right)\right), \quad 1\leq j\leq d.
$$
Consequently, as $n \to \infty$,
\begin{align}
\nonumber
\frac{\partial^d}{\partial x_1 \dots \partial x_d} Q^{(n)}(\bfx)&=
g(\bfu_n(\bfx)) \prod_{j=1}^d \frac{n a_n^{(j)}F_j\left(a_n^{(j)}x_j+b_n^{(j)}\right)^{n-1} f_j\left(a_n^{(j)}x_j+b_n^{(j)}\right)}{F_j\left(a_n^{(j)}x_j+b_n^{(j)}\right)^{n-1}}\\
\nonumber
\nonumber& \simeq g(\bfu(\bfx)) \prod_{j=1}^d \frac{g^*_{\gamma_j}(x_j)}{G^*_{\gamma_j}(x_j)}\\
\nonumber&= \frac{\partial^d}{\partial x_1 \dots \partial x_d} G(\bfu(\bfx))=\frac{\partial^d}{\partial x_1 \dots \partial x_d}G_{\bfgamma}^*(\bfx),
\end{align}
where $g$ is as in \eqref{eq:faadibruno_g} and $g^*_{\gamma_j}(x)=(\partial/\partial x)G^*_{\gamma_j}(x)$, $1\leq j\leq d$. In particular, the second line follows from the continuity of $g$ and Proposition 2.5 in \cite{resn08}. The proof is now complete.
\end{proof}
%

%
\section{Applications}\label{sec:applications}
%
%
%
 The strong convergence results established in Sections \ref{sec:strong results for copulas} and \ref{sec:general case} can be used to refine asymptotic statistical theory for extremes.
 Max-stable distributions have been used for modelling extremes in several statistical analyses (e.g. \citealt[Ch. 8]{coles2001}; \citealt[Ch. 9]{beirgotese04}; \citealt{marcon2017}; \citealt{mhalla2017} to name a few). Parametric and nonparametric inferential procedures have been proposed for fitting max-stable models to the data (e.g., \citealt{gudendorf2012}; \citealt{berbude13}; \citealt{marcon2017}; \citealt{d2017_B}). The asymptotic theory of the corresponding estimators is well established assuming that a sample of (componentwise) maxima follows a max-stable distribution. In practice, the latter provides only an approximate distribution for sample maxima. The recent results in \citet{ferreira2015}, \citet{dombry2015}, \citet{bucher2018} and \cite{berghaus2018} account for such model misspecification, in the univariate setting.
 In the multivariate case, in \citet{Bucher2014}, weak convergence and consistency in probability of empirical copulas, under suitable second order conditions (\citealt[e.g.][]{BUCHER2019}), have been studied. This is the only multivariate contribution focusing on the problem of convergences, under model misspecification, as far as we known.
In the sequel, we illustrate how our variational convergence results, obtained under conditions \eqref{eqn:crucial expansion of copula} and \eqref{eq:limit_under_der},  allow to establish a stronger form of consistency, for both frequentist and Bayesian procedures. To do that, we resort to the notion of remote contiguity.

\begin{defi}
(\citealt{kleijn2017}) For $k\in\N$, let $r_k,s_k$ be real valued sequences such that
$0 < r_k,s_k \to_{k \to \infty} 0$. Let $\mu_k$ and $\nu_k$ be sequences of probability measures. Then, $\nu_k$ is said $r_k$-to-$s_k$-remotely contiguous with respect to $\mu_k$ if $\mu_k(E_k) = o(r_k)$, for a sequence of measurable events $E_k$, implies $\nu_k(E_k)=o(s_k)$. In this case, we write $s_k^{-1}\nu_k \lhd r_k^{-1}\mu_k$.
\end{defi}

\subsection{Frequentist approach}\label{sec:freq_app}
%
Let $\Theta$ denote a parameter space (possibly infinite dimensional) and
$\theta \in \Theta$ be a parameter of interest.
Let $\bfY$ be a $d$-dimensional rv with a df $F$, pertaining to a probability measure $\mu_0$ on $\mathbb{B}^d$. Denote by $\mu_k$ the corresponding $k$-fold product measure.
Let $\bfY^{(1:k)}=(\bfY^{(1,k)}, \ldots,\bfY^{(k,k)})$ be
a sequence of $k$ iid copies of $\bfY$. Consider a measurable map $T_k: \times_{i=1}^k\mathbb{R}^d \to \Theta$ and let
$$
\widehat{\theta}_k:=T_k(\bfY^{(1:k)})
$$
be an estimator of $\theta$.
%
%
%
Let $\mathscr{D}$ denote a metric on $\Theta$.

If for every $\varepsilon>0$ there are constants $c_\varepsilon,c_\varepsilon'>0$ such that $
\mu_k(\mathscr{D}(\widehat{\theta}_k,\theta)>\varepsilon)=o(e^{-c_\varepsilon k})
$
and $k^{1+c_\varepsilon'}\nu_k \lhd e^{c_\varepsilon k} \mu_k$, then, we can conclude by
Borel-Cantelli lemma that
$$
\mathscr{D}(T_k(\bfZ^{(1:k)}), \theta)\to_{k \to \infty}0,\qquad \nu_k\text{-almost surely},
$$
where $\bfZ^{(1:k)}=(\bfZ^{(1,k)}, \ldots,\bfZ^{(k,k)})$ is a sequence of iid rv with common probability measure $\nu_{0,k}$ on $\mathbb{B}^d$, and $\nu_k$ is the corresponding $k$-fold product measure.
 The required form of remote contiguity easily obtains if
 $\sup_{A \in \mathbb{B}^d}|\nu_{0,k}(A)-\mu_0(A)|\to_{k \to \infty}0$, $\mu_0$ and $\nu_{0,k}$ have the same support and continuous Lebesgue densities, $p_{0,k}$ and $m_0$, satisfying
 \begin{equation}\label{eq: integrability}
 \sup_{k \geq k_0} \rho_\delta(\nu_{0,k},\mu_0):=\sup_{k \geq k_0}\int_{\mathcal{X}_{\delta,k}}\left(
 p_{0,k}(\bfx)/ m_{0}(\bfx)
 \right)^\delta p_{0,k}(\bfx)\diff\bfx <\infty,
 \end{equation}
 for some $\delta\in (0,1]$ and $k_0\in \mathbb{N}$, where $\mathcal{X}_{\delta,k}=\{\bfx \in \mathbb{R}^d:  p_{0,k}(\bfx)/ \mu_{0}(\bfx)>e^{1/\delta}\}$.
 Essentially, variational convergence and \eqref{eq: integrability} guarantee that the fourth moments and the expectations of the triangular array of variables $ \{\log p_{0,k}(\bfZ^{(i,k)})-\log m_0(\bfZ^{(i,k)}), 1 \leq i \leq  k; k\geq k_0+k_0'\}$ are uniformly bounded and asymptotically null, respectively, for a sufficiently large $k_0' \in \mathbb{N}$. The corresponding sequence of (rescaled) log-likelihood ratios, then, converges to 0 by the strong law of large numbers.

This novel asymptotic technique can be fruitfully applied to parameter estimation problems for multivariate max-stable models. In this context, the probability measure $\mu_0$ can be associated to a multivariate max-stable df $G_{\bfgamma}^*$ or to its extreme-value copula. Accordingly, we see the probability measure $\nu_{0,k}$ as associated to the df of a normalized rv of componentwise maxima,
computed over a number of underlying rv indexed by $k$, say $n_k$.

Exploiting Corollary \ref{cor: max_copula}, herein we specialise the above procedure to the estimation of an extreme-value copula via the empirical copula of sample maxima.

First, we recall some basic notions. Let
$\bfZ^{(1:k)}$ be a sequence of iid copies of a rv $\bfZ$ with some copula $C$.
Then, the empirical copula function $\widehat{C}_k$  is a map $T_k:\times_{i=1}^k\mathbb{R}^d \mapsto \ell^\infty([0,1]^d)$
defined by
\begin{equation*}\label{eq:empirical_copula}
\begin{split}
&\widehat{C}_k(\bfu; \bfZ^{(1:k)}):=(T_k(\bfZ^{(1:k)}))(\bfu) \\
&=\frac{1}{k}\sum_{i=1}^k \indic\left(\frac{\sum_{l=1}^k {\indic(Z^{(l,k)}_1 \leq Z^{(i,k)}_1)}}{k}
 \leq \,u_1, \ldots, \frac{\sum_{l=1}^k \indic(Z^{(l,k)}_d \leq Z^{(i,k)}_d)}{k} \leq \,u_d\right),
\end{split}
\end{equation*}
for $\bfu \in [0,1]^d$, with $\indic(E)$ denoting the indicator function of the event $E$.
\begin{prop}\label{prop: consistency}
Let $\bfM^{(n)}=(M_1^{(n)},\ldots,M_d^{(n)})$, $C$ and $G$ be as in Proposition \ref{prop:multi}, with $C$ satisfying the assumptions of Corollary \ref{cor: max_copula}.
Let $\bfM^{(n,1:k)}=(\bfM^{(n,1)}, \ldots, \bfM^{(n,k)})$ be $k$ independent copies of $\bfM^{(n)}$, with $n\equiv n_k \to_{k \to \infty} \infty$. Assume that $C^{(n)}$ and $C_G$
satisfy
\begin{equation}\label{eq:cond_int}
\sup_{k\geq k_0}\rho_\delta(C^{(n)},C_G)<	\infty,
\end{equation}
for some $\delta \in (0,1]$, $k_0 \in \mathbb{N}$, with $\rho_\delta$ as in \eqref{eq: integrability}.
%
Then, almost surely
$$
\sup_{\bfu \in [0,1]^d}\left|\widehat{C}_k(\bfu)- C_G(\bfu)\right|\to_{k \to \infty}0,
$$
where $\widehat{C}_k \equiv \widehat{C}_k(\cdot; \bfM^{(n,1:k)})$.
\end{prop}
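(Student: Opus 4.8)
The plan is to derive the almost-sure consistency of $\widehat C_k$ from the abstract remote-contiguity scheme sketched just before the statement, applied with $\mu_0$ the probability measure of the extreme-value copula $C_G$ and $\nu_{0,k}$ the measure of $C^{(n)}=C^n(\cdot^{1/n})$, the copula of the sample maxima $\bfM^{(n)}$. The estimator is the empirical copula map $T_k$, and the target metric on $\Theta=\ell^\infty([0,1]^d)$ is the sup-norm. Concretely, I would first invoke the well-known exponential concentration of the empirical copula process around the true copula: when $\bfZ^{(1:k)}$ are iid with copula $C_G$, one has $\mu_k\bigl(\sup_{\bfu}|\widehat C_k(\bfu)-C_G(\bfu)|>\varepsilon\bigr)=o(e^{-c_\varepsilon k})$ for every $\varepsilon>0$ and some $c_\varepsilon>0$; this follows from the Dvoretzky--Kiefer--Wolfowitz inequality applied to the empirical process underlying the (pseudo-observation based) empirical copula, together with the standard estimate controlling the error between the empirical copula and the empirical process of the unobserved uniform ranks. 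So the first step is to record that $E_k:=\{\sup_{\bfu}|T_k(\cdot)(\bfu)-C_G(\bfu)|>\varepsilon\}$ satisfies $\mu_k(E_k)=o(e^{-c_\varepsilon k})$.

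The second step is to establish the required remote contiguity $k^{1+c'_\varepsilon}\nu_k\lhd e^{c_\varepsilon k}\mu_k$, where $\nu_k,\mu_k$ are the $k$-fold product measures of $\nu_{0,k}$ and $\mu_0$. Here I would apply verbatim the criterion recalled in the paragraph preceding the proposition: it suffices that (i) $\sup_{A\in\mathbb B^d\cap[0,1]^d}|C^{(n)}(A)-C_G(A)|\to 0$, (ii) $C^{(n)}$ and $C_G$ have the same support and continuous Lebesgue densities, and (iii) the integrability bound \eqref{eq:cond_int}, namely $\sup_{k\ge k_0}\rho_\delta(C^{(n)},C_G)<\infty$, holds. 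Item (i) is exactly Corollary \ref{cor: max_copula}, whose hypotheses are assumed; item (ii) holds because, by the assumptions of Corollary \ref{cor: max_copula}, $C$ has continuous partial derivatives of order $d$ on $(0,1)^d$, hence so does $C^{(n)}(\bfu)=C^n(\bfu^{1/n})$, and the density is positive on $(0,1)^d$ for $n$ large (as in the proof of Corollary \ref{cor: max_copula}); $C_G$ likewise has a continuous positive density on $(0,1)^d$. Item (iii) is the standing hypothesis \eqref{eq:cond_int}. Then the generic argument in the text — uniform boundedness of fourth moments and asymptotic nullity of expectations of the triangular array of log-density ratios $\log p_{0,k}(\bfZ^{(i,k)})-\log m_0(\bfZ^{(i,k)})$, followed by a strong law of large numbers for the rescaled log-likelihood ratios — yields precisely $k^{1+c'_\varepsilon}\nu_k\lhd e^{c_\varepsilon k}\mu_k$.

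The third step is purely mechanical: combine the two previous steps through the Borel--Cantelli argument of the displayed implication preceding Subsection \ref{sec:freq_app}. Since $\mu_k(E_k)=o(e^{-c_\varepsilon k})$ and $k^{1+c'_\varepsilon}\nu_k\lhd e^{c_\varepsilon k}\mu_k$, remote contiguity gives $\nu_k(E_k)=o(k^{-1-c'_\varepsilon})$, hence $\sum_k\nu_k(E_k)<\infty$, and Borel--Cantelli yields $\sup_{\bfu}|\widehat C_k(\bfu)-C_G(\bfu)|\to 0$ $\nu_k$-almost surely, where $\widehat C_k=\widehat C_k(\cdot;\bfM^{(n,1:k)})$ and $\bfM^{(n,1:k)}$ are iid with copula $C^{(n)}$. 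Taking $\varepsilon=\varepsilon_m\downarrow 0$ along a countable sequence and intersecting the full-measure events gives the claimed almost-sure uniform convergence.

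I expect the main obstacle to be the first step — the exponential-in-$k$ concentration of the empirical copula uniformly in $\bfu$ — since the empirical copula is not a simple empirical average but involves plug-in ranks, so the DKW bound must be supplemented by a uniform control of the rank-substitution error (for instance via the oscillation modulus of the uniform empirical process, or a bracketing/Talagrand-type deviation bound) to retain the sub-exponential tail; the triangular-array nature of the data, with $n=n_k$ varying, requires that all constants be chosen uniformly in $k$, which is where the integrability hypothesis \eqref{eq:cond_int} does the real work. The remaining steps are essentially an application of results already in hand (Corollary \ref{cor: max_copula}) and of the abstract contiguity machinery quoted from \citet{kleijn2017}.
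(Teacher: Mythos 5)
Your proposal is correct and follows essentially the same route as the paper: an exponential concentration bound for the empirical copula under $C_G^{(k)}$ (which the paper obtains from standard empirical-process references), a remote-contiguity relation between the product measures of $C^{(n)}$ and $C_G$ (which the paper isolates as Lemma \ref{lem:remote}, proved via Corollary \ref{cor: max_copula}, the integrability hypothesis \eqref{eq:cond_int} and a fourth-moment/Markov bound giving $o(k^{-2})$), and a Borel--Cantelli conclusion using the rank-invariance $\widehat{C}_k(\cdot;\bfM^{(n,1:k)})=\widehat{C}_k(\cdot;\bfU^{(n,1:k)})$. The only difference is one of detail: you defer the contiguity step to the generic criterion stated before the proposition, whereas the paper carries out the moment computations explicitly.
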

For the proof of Proposition of \ref{prop: consistency} we establish  the following remote contiguity relation.
\begin{lemma}\label{lem:remote}
	Let $C^{(n,k)}$ and $C^k_G$ denote the $k$-fold product measures pertaining to $C^{(n)}$ and $C_G$, respectively. Then, $k^{2}C^{(n,k)} \lhd e^{ck} C^k_G$, for any $c>0$.
\end{lemma}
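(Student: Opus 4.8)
\emph{Proof plan.} The idea is to run, in the present copula setting, the remote-contiguity argument outlined before the statement of Proposition~\ref{prop: consistency}. The three inputs are: the variational convergence $\sup_{A\in\mathbb B^d\cap[0,1]^d}\abs{C^{(n)}(A)-C_G(A)}\to 0$ given by Corollary~\ref{cor: max_copula} (recall $n\equiv n_k\to\infty$); the fact that $C^{(n)}$ and $C_G$ have the same support and continuous densities $c^{(n)},c_G$, so that $C^{(n)}\ll C_G$ and $L_k:=\d C^{(n,k)}/\d C^k_G=\prod_{i=1}^k h(\bfU^{(i)})$ is well defined $C^k_G$-a.s., where $h:=c^{(n)}/c_G$ and $\bfU^{(1)},\dots,\bfU^{(k)}$ are the coordinate projections (i.i.d.\ $\sim C^{(n)}$ under $C^{(n,k)}$); and the integrability bound \eqref{eq:cond_int}, i.e.\ $\sup_k\rho_\delta(C^{(n)},C_G)<\infty$. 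Fix $c>0$ and a sequence $E_k$ with $C^k_G(E_k)=o(e^{-ck})$; we must show $C^{(n,k)}(E_k)=o(k^{-2})$. For any threshold $t>0$,
\[ C^{(n,k)}(E_k)=\int_{E_k}L_k\,\d C^k_G\le t\,C^k_G(E_k)+C^{(n,k)}(L_k>t). \]

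Taking $t=t_k:=e^{ck/2}$, the first term is $e^{ck/2}\,o(e^{-ck})=o(e^{-ck/2})=o(k^{-2})$, since $k^2e^{-ck/2}\to 0$ for every $c>0$. For the second term, writing $W_i:=\log h(\bfU^{(i)})$ — i.i.d.\ copies, under $C^{(n,k)}$, of $W:=\log(c^{(n)}/c_G)(\bfU)$ with $\bfU\sim C^{(n)}$ — we have $C^{(n,k)}(L_k>t_k)=C^{(n,k)}\bigl(\sum_{i=1}^k W_i>ck/2\bigr)$, and it remains to bound this by $o(k^{-2})$.

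To that end I would prove two uniform-in-$k$ facts about $W$: (i)~$\kappa_k:=E_{C^{(n)}}[W]=\mathrm{KL}(C^{(n)}\,\|\,C_G)\to 0$; and (ii)~$\sup_k E_{C^{(n)}}\bigl[\abs{W}^{2m}\bigr]<\infty$ for every $m\in\N$ (only $m=3$ is used below; note (ii) with $m=1$ also makes $\kappa_k$ finite). For (ii): the negative part is immediate, since $E_{C^{(n)}}[(W^-)^{2m}]=\int_{\{h<1\}}h(\log(1/h))^{2m}\,\d C_G\le\sup_{s\in(0,1]}s(\log(1/s))^{2m}<\infty$; and for the positive part, changing measure to $C_G$ and using $(\log^+x)^{2m}\le(2m/\delta)^{2m}x^{\delta}$ for all $x>0$,
\[ E_{C^{(n)}}[(W^+)^{2m}]=E_{C_G}\bigl[h(\log^+h)^{2m}\bigr]\le(2m/\delta)^{2m}E_{C_G}[h^{1+\delta}]\le(2m/\delta)^{2m}\bigl(\rho_\delta(C^{(n)},C_G)+e\bigr), \]
which is bounded by \eqref{eq:cond_int} (on $\mathcal{X}_{\delta,k}^{\complement}$ one has $h^{\delta}\le e$, on $\mathcal{X}_{\delta,k}$ one gets exactly $\rho_\delta$). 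For (i) I would use the identity $\mathrm{KL}(C^{(n)}\,\|\,C_G)=\int c_G\,\psi(h)$ with $\psi(x):=x\log x-x+1\ge 0$, and split the integral over $\{1/2\le h\le 2\}$, $\{2<h\le e^{1/\delta}\}$ and $\mathcal{X}_{\delta,k}=\{h>e^{1/\delta}\}$: on the first set $\psi(h)\le(h-1)^2\le\abs{h-1}$, so the contribution is at most $\int\abs{c^{(n)}-c_G}=2\sup_A\abs{C^{(n)}(A)-C_G(A)}\to 0$; on the second, $\psi(h)\le\psi(e^{1/\delta})$ is bounded while $C_G(h>2)\le\int\abs{c^{(n)}-c_G}\to 0$; on $\mathcal{X}_{\delta,k}$, $\psi(h)\le h\log h\le(2/\delta)h^{1+\delta/2}$, and by Cauchy--Schwarz $\int_{\mathcal{X}_{\delta,k}}h^{1+\delta/2}c_G=\int_{\mathcal{X}_{\delta,k}}h^{\delta/2}c^{(n)}\le\rho_\delta(C^{(n)},C_G)^{1/2}\,C^{(n)}(\mathcal{X}_{\delta,k})^{1/2}\to 0$, because $\mathcal{X}_{\delta,k}\subseteq\{h>2\}$ and $C^{(n)}(\mathcal{X}_{\delta,k})\le C_G(\mathcal{X}_{\delta,k})+\sup_A\abs{C^{(n)}(A)-C_G(A)}\le C_G(h>2)+o(1)\to 0$.

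Granting (i) and (ii), the conclusion is routine. Since $\kappa_k\to 0$, for $k$ large $\{\sum_i W_i>ck/2\}\subseteq\{\sum_i(W_i-\kappa_k)>ck/4\}$, so Markov's inequality applied with the sixth moment, together with a standard moment inequality for sums of i.i.d.\ centred variables (e.g.\ a Rosenthal-type bound), giving $E\bigl[\bigl(\sum_{i=1}^k(W_i-\kappa_k)\bigr)^{6}\bigr]=O(k^{3})$ under the uniform sixth-moment bound from (ii), yields $C^{(n,k)}(L_k>t_k)\le O(k^3)/(ck/4)^6=O(k^{-3})=o(k^{-2})$. Combining the two terms gives $C^{(n,k)}(E_k)=o(k^{-2})$, i.e.\ $k^2C^{(n,k)}\lhd e^{ck}C^k_G$ for every $c>0$, as claimed. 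The step I expect to be the main obstacle is (i): variational convergence alone does not force the Kullback--Leibler divergence to vanish, and it is precisely the uniform-integrability input \eqref{eq:cond_int} that controls the ``heavy-ratio'' set $\mathcal{X}_{\delta,k}$ and bridges this gap; once this is in place, everything else is elementary.
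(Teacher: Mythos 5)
Your proposal is correct and follows essentially the same route as the paper's proof: the same likelihood-ratio thresholding $C^{(n,k)}(E_k)\le t\,C^k_G(E_k)+C^{(n,k)}(L_k>t)$, followed by a Markov/moment bound on the sum of log-likelihood ratios, with condition \eqref{eq:cond_int} supplying the uniform higher-moment control and Corollary \ref{cor: max_copula} driving the first moment to zero. The only differences are cosmetic: the paper cites Theorem 6 of Wong and Shen (1995) to get $\max(\eta_{1,k},\eta_{2,k})=O(\varepsilon_k\log^2(1/\varepsilon_k))$ and then uses an explicit fourth-central-moment computation, whereas you prove the Kullback--Leibler convergence by hand via the splitting over $\{1/2\le h\le 2\}$, $\{2<h\le e^{1/\delta}\}$ and $\mathcal X_{\delta,k}$ and use a sixth-moment Rosenthal bound instead.
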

\begin{proof}
	Let $E_k$, $k=1,2, \ldots$ be a sequence of measurable events satisfying $C_G^{k}(E_k)=o(e^{-ck})$, for some $c>0$. It is not difficult to see that, for any $\varepsilon>0$,
	$$
	C^{(n,k)}(E_k) \leq e^{\varepsilon k} C_G^k(E_k)+C^{(n,k)}( S_k>\varepsilon k ),
	$$
	where $S_k = \sum_{i=1}^k \log \left\{c^{(n)}(\bfU^{(n,i)})/c_G(\bfU^{(n,i)})\right\}$, $\bfU^{(n,i)}, \, 1 \leq i \leq k,$ are iid according to $C^{(n)}$, $c^{(n)}$ and $c_G$ are the Lebesgue densities of $C^{(n)}$ and $C_G$, respectively. Choosing $\varepsilon<c$, the first term on the right-hand side is of order $o(e^{-(c-\varepsilon)k})$. As for the second term,  as $k \to +\infty$ we have that $n\equiv n_k\to\infty$ and, by Corollary \ref{cor: max_copula}, $
	\varepsilon_k:=\sup_{A \in \mathbb{B}^d\cap [0,1]^d}|C^{(n)}(A)-C_G(A)|=o(1)$. Thus, defining
	$$
	\eta_{\alpha,k}:=E\left[
	\log^\alpha \left\{\frac{c^{(n)}(\bfU^{(n,1)})}{c_G(\bfU^{(n,1)})}\right\}
	\right], \quad \alpha \in \mathbb{N},
	$$
	under assumption \eqref{eq:cond_int}, Theorem 6 in \cite{wong1995} guarantees that, as $k \to +\infty$ ,
	$
	\max(\eta_{1,k}, \eta_{2,k})= O(\varepsilon_k \log^2(1/\varepsilon_k))  \leq \varepsilon/2.
	$
	Furthermore, simple analytical derivations lead to show that
	$$
	\sup_{k \geq k_0} (-\eta_{3,k})\leq 1+ \sup_{k \geq k_0}\eta_{4,k} \leq 2 + \log^4(K)+\sup_{k \geq k_0}\rho_\delta(C^{(n)},C_G)<+\infty,
	$$
	for some large but fixed $K>e^{1/\delta}$.
	Together with triangular and Markov inequalities, these facts entail that as $k \to +\infty$
	\begin{equation*}
	\begin{split}
	C^{(n,k)}( S_k>\varepsilon k ) & \leq C^{(n,k)}( |S_k-k\eta_{1,k}|>\varepsilon/2 k )\\
	& \leq \left(\frac{2}{\varepsilon k}\right)^4E\left[ (S_k-k\eta_{1,k})^4\right]\\
	& \leq \left(\frac{2}{\varepsilon }\right)^4
		\left[\frac{1}{k^3}(\eta_{4,k}-4\eta_{1,k}\eta_{3,k}+6\eta_{1,k}^2 \eta_{2,k})+\frac{3}{k^2}(\eta_{2,k}-\eta_{1,k})^2
		\right]\\
		&=o(k^{-2}),
	\end{split}
	\end{equation*}
where, in the third line, we exploit nonnegativity of $\eta_{1,k}$. The result now follows.
\end{proof}
\begin{proof}[Proof of  Proposition \ref{prop: consistency}]
Let $\bfV$ be a rv distributed according to the extreme-value copula $C_G$. Let
$\bfV^{(1:k)}=(\bfV^{(1)}, \ldots, \bfV^{(k)})$ be a sequence of iid copies of $\bfV$
with joint distribution $C_G^{(k)}$.
Then, standard empirical process arguments (\citealt{gudendorf2012}, \citealt{10.1007/BFb0097426}, \citealt{wellner92}) yield that, for any $\varepsilon>0$,
\begin{equation*}
\begin{split}
&C_G^{(k)}\left(\sup_{\bfu \in [0,1]^d}\left|\widehat{C}_k(\bfu; \bfV^{(1:k)})- C_G(\bfu)\right|>\varepsilon\right) \\
&\quad \leq 2d\exp\left(-\frac{b_\varepsilon^2k}{(d+1)^2}\right)+16\frac{kb_\varepsilon^2}{(d+1)^2} \exp\left(-\frac{2b_\varepsilon^2k}{(d+1)^2}\right)
\end{split}
\end{equation*}
for some $b_\varepsilon\in(0, \varepsilon)$.
The term on the right hand side is of order $O(e^{-c_\varepsilon k})$, for some $c_\varepsilon>0$.
By Lemma \ref{lem:remote}, we have that $k^2 C^{(n,k)}\lhd e^{ck}C_G^{(k)}$ for all $c>0$, where $C^{(n,k)}$ is the $k$-fold product measure corresponding to $C^{(n)}$.
Let $\bfU^{(n,1:k)}=(\bfU^{(n,1)}, \ldots, \bfU^{(n,k)})$, where
$$
\bfU^{(n,i)}=\left(F_1\left(M_1^{(n,i)}\right)^n, \ldots,F_d\left(M_d^{(n,i)}\right)^n\right), \quad i=1, \ldots,k.
$$
The result now follows observing that $\bfU^{(n,1:k)}$ is distributed according to $C^{(n,k)}$ and that
$
\widehat{C}_k(\bfu)\equiv \widehat{C}_k(\bfu; \bfM^{(n, 1:k)})=\widehat{C}_k(\bfu; \bfU^{(n,1:k)}).
$
\end{proof}
\begin{rema}
{\upshape
Notice that the assumption in \eqref{eq:cond_int} of Proposition \ref{prop: consistency} is not overambitious. Indeed, when $C^{(n)}$ is obtained from copulas that are extreme-value copulas, the required condition is always satisfied. While, when $C^{(n)}$ is obtained from copulas that are in the domain of attraction of extreme-value copulas, analytically verifying \eqref{eq:cond_int} seems troublesome. Still, numerically checking whether some copula models meet this asumption can be fairly simple.
For instance, consider the copula of Example \ref{exam:DoA_copula}, given in equation \eqref{eq:exponential_copula}, and let $c$ denote its density. Denote by
$c^{(n)}$ the density of the copula $C^{(n)}$ pertaining to $C$
%
and by $c_G$ the density of the extreme-value copula model in  \eqref{eq:limit_exp_copula}. In this case, Corollary \ref{cor: max_copula} applies and $C^{(n)}$ converges to $C_G$ in variational distance. Figure \ref{fig:copula_densities} displays the plots of the densities $c$, $c_G$ and $c^{(n)}$, with $n=100$. Outside a neighborhood of the origin, pointwise convergence of $c^{(n)}$ to $c_G$ turns out to be quite fast. In addition, the middle-right to bottom-right panels show that the density ratio $c^{(n)}/c_G$ is uniformly bounded by a finite constant, as the sample size $n$ increases. Consequently, the condition in \eqref{eq:cond_int} is satisfied.
}
\end{rema}
%
%
%
\begin{figure}[h!]
	\centering
	\includegraphics[width=0.40\textwidth, page=1]{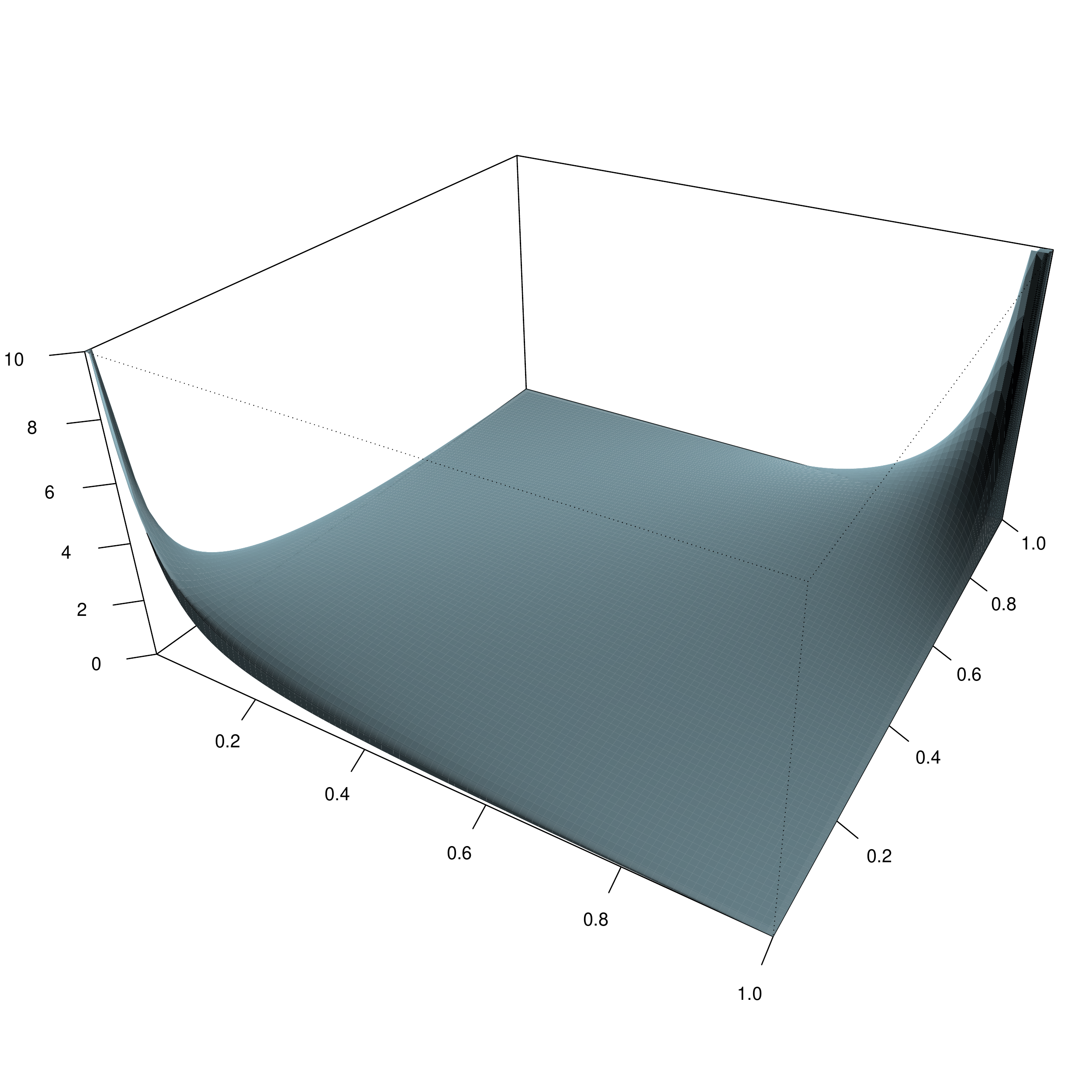}
	\includegraphics[width=0.40\textwidth, page=2]{plots3.pdf}\\
	\includegraphics[width=0.40\textwidth, page=3]{plots3.pdf}
	\includegraphics[width=0.42\textwidth, page=4]{plots3.pdf}\\
    \includegraphics[width=0.40\textwidth, page=5]{plots3.pdf}
    \includegraphics[width=0.40\textwidth, page=6]{plots3.pdf}
	\caption{Top-left and -right panels display the densities $c_G$ and $c$ of the copula models in \eqref{eq:limit_exp_copula} and in \eqref{eq:exponential_copula}, respectively. Middle-left panel shows the density $c^{(n)}$ of the copula $C^{(n)}$ pertaining to the copula model in \eqref{eq:exponential_copula}, with sample size $n=100$. Middle-right to bottom right panels depict the density ratio $c^{(n)}/c_G$, for $n=2,50,100$, respectively.}
	\label{fig:copula_densities}
\end{figure}
%
%

%
\subsection{Bayesian approach}\label{sec:Bayes_app}
A similar scheme is exploited by \citet{padoanrizzelli2019} in a Bayesian context, where extended Schwartz' theorem, e.g. \citet[][Theorem 6.23]{gvdv_2017}, provides with exponential bounds for posterior concentration in a neighborhood of the true parameter.
In particular, \citet{padoanrizzelli2019} consider a nonparametric Bayesian approach for estimating the $D$-norm $\norm\cdot_D$ and the densities of the associated angular measure, see \citet[][pp. 25--29]{falk2019}. Therein, Corollary \ref{cor: conv_rescaled} is leveraged to obtain a suitable remote contiguity result, allowing to extend
almost-sure consistency of the proposed estimators from the case of data following a max-stable model, to the case of suitably normalised sample maxima, whose distribution lies in a variational neighbourhood of the latter.

\section*{Acknowledgements}

The authors are indebted to the Associate Editor and two anonymous reviewers for their careful reading of the manuscript and their constructive remarks.
Simone A. Padoan is supported by the Bocconi Institute for Data Science and Analytics (BIDSA).

\bibliographystyle{chicago}
\bibliography{evt_v2}

\end{document}